\documentclass[12pt,reqno]{amsart}

\usepackage{amsmath, amsthm}

\usepackage{geometry}
\usepackage{fullpage}
\usepackage[all]{xy}

\usepackage[mathcal,mathscr]{euscript}
\usepackage[OT2,T1]{fontenc}
\usepackage[english]{babel}
\usepackage{newtxtext}
\usepackage{newtxmath}

\usepackage{etoolbox}
\patchcmd{\section}{\scshape}{\bfseries}{}{}
\makeatletter
\renewcommand{\@secnumfont}{\bfseries}
\makeatother
\numberwithin{equation}{section}
\usepackage{enumitem}

\newtheorem*{introtheorem}{Main Theorem}
\theoremstyle{definition}
\newtheorem*{introremark*}{Remark}
\theoremstyle{plain}
\newtheorem{theorem}{Theorem}[subsection]
\newtheorem{proposition}[theorem]{Proposition}
\newtheorem{lemma}[theorem]{Lemma}
\newtheorem{corollary}[theorem]{Corollary}
\theoremstyle{definition}
\newtheorem{remark}[theorem]{Remark}
\newtheorem{example}[theorem]{Example}

\NewCommandCopy{\proofqedsymbol}{\qedsymbol}
\AtBeginEnvironment{proof}{\renewcommand{\qedsymbol}{\proofqedsymbol}}

\AtBeginEnvironment{example}{%
  \pushQED{\qed}\renewcommand{\qedsymbol}{$\lozenge$}%
}
\AtEndEnvironment{example}{\popQED}

\AtBeginEnvironment{remark}{%
  \pushQED{\qed}\renewcommand{\qedsymbol}{$\lozenge$}%
}
\AtEndEnvironment{remark}{\popQED}

\usepackage{mathtools}

\usepackage[font=small, labelfont=bf]{caption}

\newcommand{\Z}{\mathbf{Z}}
\newcommand{\R}{\mathbf{R}}
\newcommand{\Q}{\mathbf{Q}}
\newcommand{\Cc}{\mathbf{C}}
\newcommand{\F}{\mathcal{F}}
\newcommand{\Hh}{\mathbb{H}}

\newcommand{\DD}{\mathbb{D}}
\newcommand{\B}{\mathbb{B}}
\renewcommand{\geq}{\geqslant}
\renewcommand{\leq}{\leqslant}
\renewcommand{\Re}{\mathrm{Re}}

\renewcommand\d{\mathop{}\!\mathrm{d}} 
\DeclareMathOperator*{\LH}{\mathrm{Lim}} 

\makeatletter
\newcommand*{\defeq}{\mathrel{\rlap{%
                     \raisebox{0.24ex}{$\m@th\cdot$}}%
                     \raisebox{-0.24ex}{$\m@th\cdot$}}%
                     =}
\makeatother

\begin{document}
\date{\today}
\title[Eisenstein series measure]{The limiting zero counting measure for Eisenstein series    } 
\author[G.~Cornelissen]{Gunther Cornelissen}
\author[M.~Mehrabdollahei]{Mahya Mehrabdollahei}
\address{\normalfont Mathematisch Instituut, Universiteit Utrecht, Postbus 80.010, 3508 TA Utrecht, Nederland}
\email{g.cornelissen@uu.nl}
\address{\normalfont Max-Planck-Institut f\"ur Mathematik, Postfach 7280, 53072 Bonn, Deutschland}
\email{mehrabdollahei@mpim-bonn.mpg.de}

 \subjclass[2010]{primary: 11F11, 30C15; secondary: 14T90, 30C85} 
 \keywords{\normalfont Modular form, congruence group, Eisenstein series, zero counting measure, orbifold, Poincar{\'e}–Lelong theorem}
\thanks{The authors thank the Max-Planck-Institut f\"ur Mathematik in Bonn as well as Universiteit Utrecht for excellent conditions during work on this paper.  This work was partially supported by the Dutch Research Council (NWO) through the grant number OCENW.M20.233.\\ \indent \emph{Statement on the use of AI:} Anthropic's Claude Pro (Opus 5.5) was used to explore the material for one specific rational function instead of modular forms; all further development for modular forms, including proofs and the writing of the paper was done by the authors. OpenAI’s Sol (GPT 6.1 Medium) was used to audit the paper after completion, leading to several more edits in the final version.}  

\begin{abstract} \noindent For an arbitrary congruence group $\Gamma$, we study the zeros of a generating set of the space of Eisenstein series of varying even weight for all $\Gamma(1)$-conjugates of $\Gamma$. These zeros form a $\Gamma(1)$-invariant set that descends to the modular orbifold $Y(1)$. It is known that, as the (even) weight tends to infinity, the set of zeros converges in the Hausdorff metric to finitely many geodesic segments. 

In this paper, we find the weak  limit of the normalised counting measure of the zeros in the (natural) orbifold sense: if a geodesic occurring in the Hausdorff limit is parametrised by the unit circle $S^1$ via a M\"obius transformation then the limit measure on this segment is the pullback of the Haar measure on $S^1$ by that transformation. For the proof, we first show a version of the Poincar\'e--Lelong theorem for orbifold curves, and then apply it on $Y(1)$ to reduce the problem to computing the scaled limit of the subharmonic functions $\log |E|$ associated to Eisenstein series $E$. This turns out to be the maximum of the negative logarithm of the modulus of finitely many linear functions supported on the relevant geodesic segments, and we finish the proof by showing that the associated measure has no atoms and making explicit, on each open segment, the distributional Laplacian of this function in terms of the argument measure. 

The non-atomicity allows us to conclude a sublinear growth rate in the weight for the total order of all Eisenstein series at a given point in $Y(1)$, and weak convergence also of the non-orbifold measure to the same limit.  
\end{abstract}

\maketitle

\section{Introduction}

The study of zeros of Eisenstein series goes back at least to the last joint paper of Hardy and Ramanujan from 1918 \cite{HardyRamanujan}. For classical Eisenstein series on $\mathrm{PSL}_2(\Z)$, the 1970 work of F.~Rankin and Swinnerton--Dyer \cite{RSD} provided an equally simple as definitive approximation argument, implying that all zeros lie on a specific geodesic, and equidistribute as the weight tends to infinity. For more history on the study of zeros of Eisenstein series and more general modular forms, as well as pictures of numerical simulations and precise calculations of the zero sets and their limits for various congruence groups, we refer to \cite{CCR}. 

Here, we focus on new limit results from the measure theoretic point of view. A fundamental result is due to Rudnick \cite{Rudnick}, who computed the limit measure for zeros of cuspidal Hecke eigenforms; their zeros equidistribute in the entire upper half plane. Although a marked difference between cuspidal forms and Eisenstein series lies in the fact that the limit measure for general Eisenstein series is supported on a finite union of geodesic segments (so a concentration phenomenon), some of the techniques from \cite{Rudnick}, notably potential theory in the complex plane, will also feature prominently in our work. This should come as no surprise, as the method features prominently in the study of zeros of truncated power series (originating in the work of Szeg\H{o} on his eponymous curve \cite{Szego}), zeros of orthogonal polynomials, etc., see, e.g., \cite{ST}. 

\subsection*{Setup} We fix a congruence group $\Gamma \leq \Gamma(1)=\mathrm{PSL}_2(\Z)$ of level $N$ (meaning $\Gamma(N)$ is the maximal principal congruence subgroup contained in $\Gamma$) with normaliser $N_{\Gamma(1)}(\Gamma)$ in $\Gamma(1)$ and consider the \emph{Eisenstein series of even weight $k \geq 4$ and label $(a,b) \in (N^{-1}\Z/\Z)^2$} defined by 
\begin{equation} E^{a,b}_{k,\Gamma}=E^{(a,b)}_{k,\Gamma} \defeq \hspace*{-4mm}  \sum_{g \in \Gamma(N) \backslash \Gamma} \hspace*{-3mm}  E_k^{(a,b)g} \mbox{ with } E_k^{(a,b)}(z) \defeq \hspace*{-5mm}  \sum_{(0,0) \neq (m,n) \in \Z^2}  \hspace*{-2mm} \frac{\exp(2 \pi i (an-bm))}{(mz+n)^k}, \end{equation} 
where $(a,b)g$ is given by the right action of a lift of the matrix $g\in\Gamma(1)$ to $\mathrm{SL}_2(\Z)$ on the row vector $(a,b)$; since the weight is even, this is independent of the choice of lift. For a discussion of other typical choices of generators for the Eisenstein part of the space of modular forms for $\Gamma$, see \cite[\S 2]{CCR}.
 
Let \begin{equation} N_\Gamma \defeq \mathrm{N}_{\Gamma(1)}(\Gamma) \backslash \Gamma(1) \mbox{ and } c_\Gamma \defeq N^2 \cdot |N_\Gamma|.\end{equation} For even weight $k \geq 4$, we define a \emph{norm modular form}  
\[ \mathcal N_{k,\Gamma}\defeq \hspace*{-5mm} \prod_{\substack{(a,b) \in (N^{-1}\Z/\Z)^2 \\ \gamma \in N_\Gamma}} \hspace*{-5mm}  E_{k,\gamma^{-1} \Gamma \gamma}^{a,b}.\] This is  a non-zero modular form of weight $c_\Gamma k $ for $\Gamma(1)$, not vanishing at the cusp. We consider its $\Q$-divisor 
\[ Z_{k,\Gamma} \defeq  \mathrm{div}(\mathcal N_{k,\Gamma}) = \sum_{x \in Y(1)} \frac{\mathrm{ord}_x(\mathcal N_{k,\Gamma})}{\# \mathrm{Stab}_{\Gamma(1)}(x)} x \]  
on the orbifold $Y(1) = [\Gamma(1) \backslash \Hh]$, with $\Hh$ the hyperbolic complex upper half plane; the only orbifold points are the orbits of 3\textsuperscript{rd} and 4\textsuperscript{th} roots of unity in $\Hh$. Equivalently, we consider this divisor in the standard fundamental domain $\F$ for the action of $\Gamma(1)$ on $\Hh$, given as 
\[ \F \defeq \{ z \in \Hh \colon |z|>1 \mbox{ and }-1/2 \leq \Re(z) < 1/2 \} \cup \mathcal A \mbox{ with } \mathcal A \defeq \{ e^{i \theta} \mbox{ with } \pi/2 \leq \theta \leq 2 \pi/3 \}.\]
By the valence formula and non-vanishing at the cusp, this $\Q$-divisor has degree 
\begin{equation} \label{valence} \deg(Z_{k,\Gamma}) = \sum_{x \in Y(1)} \frac{\mathrm{ord}_x(\mathcal N_{k,\Gamma})}{\# \mathrm{Stab}_{\Gamma(1)}(x)} = \frac{c_\Gamma k}{12}.\end{equation}

\subsection*{Main results} In this paper, we are interested in the corresponding natural \emph{orbifold normalised zero-counting measure} \[ \mu_{k,\Gamma} \defeq  \frac{1}{\deg(Z_{k,\Gamma})} \sum_{x \in Y(1)} \frac{\mathrm{ord}_x(\mathcal N_{k,\Gamma})}{\# \mathrm{Stab}_{\Gamma(1)}(x)} \delta_x \] on $Y(1)$, where $\delta_x$ is a Dirac point mass at $x \in Y(1)$; more precisely,  we are interested in the weak  limit of $\mu_{k,\Gamma}$ as $k \rightarrow + \infty$ over all even $k \geq 4$ for  a general congruence group, answering Question (vii) from \cite[\S~9]{CCR}. To formulate the result, we need the following concepts.

Write $|Z_{k,\Gamma}|$ for the support of $Z_{k,\Gamma}$, seen as a plain (unweighted) set. In \cite[Thm.~E]{CCR}, it was shown that $|Z_{k,\Gamma}|$ converges to a union of finitely many geodesic segments in the metric on $Y(1)$ inherited from the Hausdorff metric on the closure $\overline \F$. Every segment in the Hausdorff limit of $|Z_{k,\Gamma}|$ is itself part of the Hausdorff limit of the set of (not necessarily $\Gamma(1)$-invariant) zeros of $E_{k,\gamma^{-1}\Gamma\gamma}^{(a,b)}$ for some ---possibly multiple--- choices of $a,b,\gamma$. Each segment in the limiting zero set is contained in a geodesic
$
\{z\in\mathbb H\mid |\sigma\cdot z|=1\}
$
for a very specific matrix \(\sigma\in \mathrm{M}_2(\Z)\cap\mathrm{GL}_2(\Q)\); see \cite[Theorem E]{CCR} and Subsection \ref{geodesics} on how to find $\sigma$ in terms of the Eisenstein series. We denote the corresponding geodesic segment in $Y(1)$  by \(E_\sigma\). It is the image of a connected segment $I_\sigma$ of the unit circle under the action of the Möbius transformation \(\sigma^{-1}\). We index these $\sigma$ by a finite multiset $\mathcal I_\Gamma$ in which \(\sigma\) occurs with multiplicity equal to the number of distinct
$
(a,b,\gamma)\in N^{-1}\Z/\Z\times N^{-1}\Z/\Z\times N_\Gamma
$
for which the segment $E_\sigma$ occurs in the total limit. Then we have the following.

\begin{introtheorem} \label{main} 
For any fixed congruence group $\Gamma$ and even weight $k$ tending to $+\infty$, the normalised orbifold counting measure of the norm modular form converges weakly 
\begin{equation} \label{mainconvform} \mu_{k, \Gamma} \rightarrow  \mu_{\infty,\Gamma} \defeq  \frac{6}{c_\Gamma \pi} \sum_{\sigma \in \mathcal I_\Gamma} |\d\arg(\sigma \cdot z)|_{E_\sigma} \ = \frac{12}{c_\Gamma} \sum_{\sigma \in \mathcal I_\Gamma} \sigma^* \left(\mu_{S^1}|_{I_\sigma} \right)\end{equation} 
where $\mu_{S^1}$ is the normalised Haar measure on $S^1$, and $\mathcal I_\Gamma$ is the multiset defined above. 
\end{introtheorem}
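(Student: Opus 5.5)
We follow the strategy announced in the abstract: potential theory on the orbifold $Y(1)$. First we establish an orbifold Poincar\'e--Lelong formula. For a modular form $f$ of weight $w$ for $\Gamma(1)$ consider the $\Gamma(1)$-invariant function $u_f(z)=\log|f(z)|+\frac{w}{2}\log \Im z$. Then, as currents on $Y(1)$,
\[ \tfrac{1}{2\pi}\Delta u_f = \mathrm{div}(f) - \tfrac{w}{8\pi}\,\d\mu_{\mathrm{hyp}}, \]
where $\mathrm{div}(f)$ is the $\Q$-divisor with the weights $1/\#\mathrm{Stab}$, and test functions are $\Gamma(1)$-invariant smooth functions on $\Hh$ with compact support mod $\Gamma(1)$. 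The proof pulls back to $\Hh$ and applies the usual Poincar\'e--Lelong formula there. One then integrates over $\F$ and checks that the stabiliser weights appear exactly at $i$ and $\rho$. Applying this with $f=\mathcal N_{k,\Gamma}$ and dividing by $\deg Z_{k,\Gamma}=c_\Gamma k/12$ expresses $\mu_{k,\Gamma}$ as $\frac{1}{2\pi}\Delta$ of $\frac{12}{c_\Gamma k}\log|\mathcal N_{k,\Gamma}|$, plus a smooth term $\frac{6}{\pi}\Delta\log\Im z$ up to constants. That smooth term is a fixed multiple of the hyperbolic measure and cancels against the limit computation below. Weak convergence of $\mu_{k,\Gamma}$ therefore reduces to $L^1_{\mathrm{loc}}$ convergence of $\frac1k\log|\mathcal N_{k,\Gamma}|$, since $\Delta$ is continuous in distributions. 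Tightness follows from non-vanishing at the cusp, which gives a uniform bound near $i\infty$.

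Second, we compute the limit of $\frac1k\log|E^{a,b}_{k,\gamma^{-1}\Gamma\gamma}(z)|$ for each factor. Each Eisenstein series is a finite sum of terms $\sum c\,(mz+n)^{-k}$. Locally in $z$, one or finitely many lattice vectors minimise $|mz+n|$. The analysis of \cite[Thm.~E]{CCR}, recalled in Subsection~\ref{geodesics}, shows that the dominant terms are controlled by finitely many linear forms $\ell_j(z)=m_jz+n_j$ with non-vanishing coefficients after summing over $\Gamma(N)\backslash\Gamma$. Hence, pointwise off the finite union of segments where two dominant moduli coincide,
\[ \tfrac1k\log|E(z)| \to \max_j\bigl(-\log|\ell_j(z)|\bigr)=:\Phi(z). \]
The convergence is locally uniform away from these segments. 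Near the segments, the family $\frac1k\log|E|$ is locally uniformly bounded above, so by Hartogs-type compactness for subharmonic functions the convergence upgrades to $L^1_{\mathrm{loc}}$. This is the standard argument of \cite{Rudnick} and \cite{ST}. The step where we must rule out the subharmonic limit dropping strictly below $\Phi$ is the main obstacle. Pointwise convergence on an open dense set of full measure suffices, because subharmonic limits are determined almost everywhere.

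Third, we compute $\frac{1}{2\pi}\Delta\Phi$. On a region where $\Phi$ is given by a single $-\log|\ell_j|$, the function is harmonic. Across a segment where $|\ell_i|=|\ell_j|$, i.e.\ $|\sigma z|=1$ with $\sigma$ the matrix with rows $(m_i,n_i),(m_j,n_j)$ up to scaling, $\Phi$ equals $-\log|\ell_j|+\max(0,-\log|\sigma z|)$. The Laplacian of $\max(0,-\log|w|)$ is the arc-length measure on $|w|=1$, namely $|\d\arg w|$ times $1/(2\pi)\cdot 2\pi$. Pulling back by $\sigma$ gives $\frac{1}{2\pi}|\d\arg(\sigma z)|$ on $E_\sigma$. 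We must also show that the limit measure has no atoms at the endpoints of segments or at triple points. This holds because $\Phi$ is continuous, being a finite maximum of continuous functions, and the Laplacian of a continuous subharmonic function charges no points. The harmonic parts sum to zero except for the $\log\Im z$ contribution, which cancels the hyperbolic-measure term from the first step.

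Summing over $(a,b,\gamma)$ with multiplicities $\mathcal I_\Gamma$ and multiplying by $12/c_\Gamma$ gives $\frac{6}{c_\Gamma\pi}\sum|\d\arg(\sigma z)|_{E_\sigma}$. Finally, $|\d\arg w|/(2\pi)$ is the normalised Haar measure on $S^1$, which gives the second form of \eqref{mainconvform}.
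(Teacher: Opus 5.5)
Your proposal follows the paper's route. The ingredients are the same: an orbifold Poincar\'e--Lelong formula, and compactness of subharmonic families plus almost-everywhere identification of the limit to get $\frac1k\log|F_k|\to\Phi$ in $L^1_{\mathrm{loc}}$. Then a local computation of $\Delta\Phi$ along the segments, absence of atoms, and summation over $(a,b,\gamma)$, with vague convergence upgraded by tightness. Two of your steps are valid and slightly slicker substitutes for the paper's arguments:
\begin{itemize}
\item your local computation, which pulls back $\Delta\max(0,-\log|w|)=|\d\arg w|$ on $|w|=1$ by $\sigma$ (legitimate since $0,\infty\notin\sigma(\Hh)$), replaces the Green--Stokes/Cauchy--Riemann computation;
\item your non-atomicity argument, that a subharmonic function finite at $p$ has Riesz measure without atom at $p$, replaces the Lipschitz bound.
\end{itemize}
Non-atomicity only disposes of the exceptional points because there are finitely many of them. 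You should justify that finiteness as the paper does: a triple point lies on two distinct geodesics, which meet in finitely many points.

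The step that fails as written is tightness. Non-vanishing of $\mathcal N_{k,\Gamma}$ at the cusp is a statement for each fixed $k$. It does not prevent zeros from drifting towards $i\infty$ as $k\to\infty$; if a fixed proportion did, the vague limit would have mass $<1$ and weak convergence would fail. What you need is a bound uniform in $k$: all zeros lie in a fixed compact set, which is \cite[Thm.~A]{CCR} as used in the paper. You can also see it directly. Take $C$ minimal with $(0,C)\in\hat{\mathbb I}$ and use $|mz+n|\geq m\,\Im z$. Then the constant term $c_{(0,C)}C^{-k}$ dominates the rest of $F_k$ on $\{\Im z\geq y_0\}$, for any fixed $y_0>C$ and all large $k$.

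Your first step also has two bookkeeping errors.
\begin{itemize}
\item The coefficient is wrong. Since $\frac{1}{2\pi}\Delta\bigl(\frac w2\log\Im z\bigr)=-\frac{w}{4\pi}\,\d x\,\d y/y^2$, it should be $\frac{w}{4\pi}$, not $\frac{w}{8\pi}$; this agrees with the valence formula, since $\frac{w}{4\pi}\cdot\frac{\pi}{3}=\frac{w}{12}$.
\item There is no leftover smooth term in $\mu_{k,\Gamma}$. $\Delta\log|f|$ is already $\Gamma(1)$-invariant, and $\frac{1}{2\pi}\Delta\log|f|=\mathrm{div}(f)$ exactly (Lemma~\ref{PLmodform}).
\end{itemize}
Since $\Phi$ contains no $\log\Im z$ term, the cancellation you invoke in your third step only makes sense if you work with $u_f$ throughout. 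Its limit is $\sum\Phi+\frac{c_\Gamma}{2}\log\Im z$.
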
 
 
For technical reasons, we need to consider the zeros in an open set containing a compact set enveloping the zeros in $\overline \F$, but we will ignore this in the following brief outline of the proof. First, we fix $a,b,\gamma$ and use the fact from \cite{CCR} that the zeros of $F_k = E_{k,\gamma^{-1} \Gamma \gamma^{-1}}^{(a,b)}$ belong to a compact subset of $\overline \F$ and $H_k(z) \defeq \frac{1}{k} \log |F_k(z)|$ converges on that set minus the Hausdorff limit to an explicit function $\Phi(z)$ in $L^1_{\text{loc}}$, given as the maximum of minus the logarithm of the modulus of a finite set of linear forms, see Section \ref{notaux}. The compactness theorem for subharmonic functions implies convergence on the entire domain. Then the orbifold zero counting measure for $F_k$ is the Laplacian of the harmonic function $H_k(z)/2\pi$ in the sense of distributions; to show this, we prove a version of the Poincar\'e--Lelong theorem in the orbifold curve setting that may be of independent interest, see Theorem \ref{pol}. Standard properties of (sub-)harmonic functions imply convergence of the zero counting measures to $\Delta \Phi / 2\pi$. Globally, these can be added up and normalised over all choices of $a,b,\gamma$, thanks to the valency formula, see Section \ref{ptmc} and Proposition \ref{propPhi}. By suitable descent to $Y(1)$, we find vague convergence of measures, and tightness implies weak convergence. Finally, using the jump formula for distributional Laplacian on every occurring segment, the limit distribution is computed in terms of the argument, connecting it to the pullback of the Haar measure, see Section \ref{lmhaar}. We make everything explicit for principal congruence groups, see Examples \ref{gamman}--\ref{gamman4}.  In Subsection \ref{noatoms} we show that the limit measure has no atoms, which we use in Subsection \ref{finalsec} to prove that 
\begin{enumerate} 
\item $\mathrm{ord}_x (\mathcal N_{k,\Gamma})$ is upper bounded by $o(k)$ uniformly over $x \in Y(1)$ (Corollary \ref{smallo}); 
\item the `non-orbifold' counting measures 
\[ \nu_{k,\Gamma} \defeq \frac{\displaystyle{\sum\nolimits_{x \in Y(1)} \mathrm{ord}_x(\mathcal N_{k,\Gamma}) \delta_x}}{\displaystyle{\sum\nolimits_{x \in Y(1)} \mathrm{ord}_x(\mathcal N_{k,\Gamma})}} \] converge weakly to the same limit measure $\mu_{\infty,\Gamma}$ on $Y(1)$ (Corollary \ref{noorb}).
\end{enumerate}  

 \begin{introremark*} In \cite[Prop.~H]{CCR}, it was shown that the arguments of the zeros of $\mathcal N_{k,\Gamma(N)}$ in $\F_1 \defeq \F \cap \{ \mathrm{Re}(z) \leq 0\}$ equidistribute in $\mathcal A$ as $k \rightarrow + \infty$; passing to $Y(1)$, this implies the main theorem in that special case. The proof of angular equidistribution in the special case was based on precise results for the location of `most' zeros using Rouch\'e's theorem, and also gave details on the angular convergence rate: it is of order $O(1/k)$ in general, with a proportion of zeros lying \emph{on} the arc for finite weight, and some exceptional zeros converging at exponential speed to the arc. 
\end{introremark*} 

\begin{introremark*} Eisenstein series on $\F$ are well-approximated by rational functions given as a suitable truncation of their defining sum, say, for example, $f_k(z) \defeq (z(z+1))^k+c_1 (z+1)^k + c_2z^k$ with $c_1c_2\neq 0$ on $\Gamma(3)$. The limit in $k$ of the zero set of $f_k$ in the entire complex plane can be mapped conformally to the unit disk, and then Haar measure can be pulled back via that map to the limit set and restricted to the intersection with $\F_1$. However, this does \emph{not} equal the corresponding limit measure of the zeros of the Eisenstein series. 
\end{introremark*} 

\section{The orbifold Poincar\'e--Lelong theorem} 

The classical Poincar\'e--Lelong theorem identifies the zero measure of a holomorphic function $f$ in some domain $D \subseteq \Cc$ with a constant multiple of the Laplacian of $\log |f| \in L^1_{\text{loc}}(D)$, seen as distributions, see e.g.\ \cite[Thm.~3.7.8]{Rans}. We now prove and state a version for sections of Hermitian line bundles on orbifold curves. This may be of independent interest, albeit surely known to the experts.  

We then apply it to the modular orbifold $[\Gamma(1) \backslash \Hh]$ (a global `stacky' quotient), with modular forms of even weight seen as sections of powers of the canonical line bundle, given the Petersson metric locally. We refer to \cite{PM} and \cite{Hain} for details on the constructions surrounding the modular orbifold (in their terminology, we are considering the `effective' orbifold $[\Gamma(1) \backslash \Hh]$ instead of $[\mathrm{SL}_2(\Z) \backslash \Hh]$, the latter object being the moduli space of elliptic curves in which, however, every point has a non-trivial stabiliser), and to \cite{Pipi} for the theory of modular forms in this context. 
 
\subsection{Orbifolds, orbifold bundles and $\Q$-divisors} We recall the definitions and refer to \cite{Coman} for, in particular, the spelling out of somewhat unwieldy but natural gluing conditions. A \emph{complex orbifold chart}  at point $p$ is a covering $\pi \colon \widetilde D \rightarrow D$ of a complex disk $D$ in the local coordinate $z$ at $p$, such that, choosing a local coordinate $w$ on $\widetilde D$, the covering map is the quotient for an action of a generator of the cyclic group $\mu_{m}$ given by $w \mapsto \zeta_{m}w$, $\zeta_{m} = e^{2 \pi i/m}$. Hence $\pi$ is defined locally by $z=w^m$, of degree $m$ and, for $m>1$, totally ramified above zero. An \emph{(effective) orbifold curve} is such a connected space glued from orbifold charts such that all but finitely many points have $m=1$; points where $m>1$ are called \emph{orbifold points}.  

To transfer a standard geometric concept from the manifold to the orbifold setting, the principle is to define it in the usual way using the disk $\widetilde D$ and require equivariance for the $\mu_m$-action. 
Thus, an \emph{orbifold line bundle} $\mathcal L$ is a $\mu_m$-equivariant holomorphic line bundle over each orbifold chart disk $\widetilde D$, glued appropriately. A \emph{meromorphic section} $s$ of $\mathcal L$ around $p$ is given by a meromorphic $\mu_m$-equivariant section $\tilde s$ of $\mathcal L$ on $\widetilde D$, i.e., such that $\tilde s (\zeta_m w) = \zeta_m \cdot \tilde s (w)$. Choosing a frame (non-vanishing local trivialisation) $\kappa \colon \widetilde D \rightarrow \mathcal L$ that is equivariant, i.e., such that $\zeta_m \cdot \kappa(w) = \zeta_m^a \kappa(\zeta_m w)$ for some \emph{isotropy weight} $a \in \Z/m\Z$, we can write every meromorphic section as $\tilde s = \tilde f \cdot \kappa$ for some scalar function $\tilde f \colon \widetilde D \rightarrow \Cc$, and the equivariance is expressed by the condition that $\tilde f ( \zeta_m w) = \zeta_m^a \tilde f(w)$. It follows that $\mathrm{ord}_0 (\tilde f) = a \mbox{ mod } m$. The \emph{(orbifold) order} of $s$ at $p$ is \[\mathrm{ord}_{p}(s) \defeq \mathrm{ord}_0(\tilde f)/m\] (the $\mathrm{ord}_0(\tilde f)$ on the right is the usual order of the function $\tilde f$ at $0$), and the $\Q$-divisor on $X$ associated to $s$ is \[\mathrm{div}(s) = \sum_{p \in X} \mathrm{ord}_p(s) \cdot p \in \prod_{p \in X} \Q \cdot p,\]
where the divisors are locally finite (on a non-compact curve, infinitely many isolated zeros can occur, so the direct product cannot be replaced by a direct sum in general). By replacing $p$ by $\delta_p$ in the definition, the divisor can be seen as a current. By slight abuse of notation, we will also denote this current as $\mathrm{div}(s)$. 

\subsection{Metrics and currents on orbifolds} We fix a Hermitian metric $\tilde h$ on an orbifold line bundle $\mathcal L$ and write $||\cdot||_h$ for the associated smoothly varying norm on the fibers, requiring it to be $\mu_{m}$-invariant on each orbifold chart $\widetilde D$. The invariance implies that there is a unique function, denoted $||s||_h$, on $D$ such that $\pi^* ||s||_h = ||\tilde s||_h$. Fixing an equivariant frame $\kappa$ on $\widetilde D$, define the \emph{local weight} to be the smooth real function $\phi \defeq - \log ||\kappa||_h^2$. Furthermore, if we change the frame by $\kappa \rightarrow \kappa g$ for a nowhere vanishing holomorphic function, the local weight changes by $\phi \rightarrow \phi - 2 \log |g|$. Let $\Delta_w = 4 \partial_w \partial _{\bar w} = \partial_x^2 + \partial_y^2$ be the Laplace operator in the complex plane. Since $g$ is holomorphic, $\log |g|$ is harmonic, and hence $\Delta \phi$ does not depend on the choice of frame. From compatibility with the group action, we find that $\phi \colon \widetilde D \rightarrow \R$ is $\mu_m$-invariant, so there is a well-defined current on $X$, the \emph{Chern form} $c_1(\mathcal L,h)$ determined by $\pi^* c_1(\mathcal L,h) \defeq \frac{1}{4\pi} \Delta_w \phi$. 

\begin{theorem}[Orbifold curve Poincar\'e--Lelong theorem] \label{pol} Let $s$ denote a meromorphic, not identically zero, section of an orbifold line bundle  $\mathcal L$ with smooth equivariant Hermitian metric $h$ on an orbifold curve $X$; then, as currents on $X$,
\[ \frac{\Delta}{2 \pi} \log ||s||_h = \mathrm{div}(s) - c_1(\mathcal L,h). \]  
\end{theorem}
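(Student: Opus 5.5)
The statement is local on $X$: both sides are currents, so it suffices to check equality after pairing with test functions supported in a single orbifold chart $D$, using a partition of unity. Away from orbifold points ($m=1$) this is the classical Poincar\'e--Lelong theorem \cite[Thm.~3.7.8]{Rans}. So I fix a chart $\pi\colon \widetilde D\to D$, $z=w^m$, with an equivariant frame $\kappa$ of isotropy weight $a$. I write $\tilde s=\tilde f\kappa$, so that
\[ \log\|\tilde s\|_h = \log|\tilde f| - \tfrac12\phi \]
on $\widetilde D$. The classical theorem applied to the meromorphic function $\tilde f$ on the honest disk $\widetilde D$ gives, as distributions in $w$,
\[ \frac{\Delta_w}{2\pi}\log\|\tilde s\|_h = \sum_{q\in\widetilde D}\mathrm{ord}_q(\tilde f)\,\delta_q - \frac{1}{4\pi}\Delta_w\phi = \mathrm{div}(\tilde f) - \pi^*c_1(\mathcal L,h). \]

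The key step is to define how currents on $X$ pair with test functions near an orbifold point, and then to descend the equality. The natural convention is that a current $T$ on $D$ corresponds to a $\mu_m$-invariant current $\widetilde T$ on $\widetilde D$ via
\[ \langle T,\psi\rangle \defeq \tfrac1m\langle \widetilde T,\psi\circ\pi\rangle . \]
This is consistent with $\pi^*c_1$ being defined upstairs, and with $\Delta$ on $X$ being computed in the chart, so that $\pi^*(\Delta T)=\Delta_w\widetilde T$. I will have to state this convention explicitly and check that it is compatible with gluing, using the gluing conditions in \cite{Coman}.

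With this convention, the descent proceeds as follows. All terms upstairs are $\mu_m$-invariant: $\phi$ by hypothesis, and $|\tilde f|$ because $|\tilde f(\zeta_m w)|=|\tilde f(w)|$. Pairing both sides with $\psi\circ\pi$ and dividing by $m$ gives the left-hand side and $-c_1(\mathcal L,h)$ directly. It remains to identify the divisor term. A point $q\neq0$ in $\widetilde D$ lies in a free orbit of size $m$, and each point of that orbit has the same order. Hence that orbit contributes
\[ \tfrac1m\cdot m\cdot \mathrm{ord}_q(\tilde f)\,\psi(\pi(q)) = \mathrm{ord}_{\pi(q)}(s)\,\psi(\pi(q)), \]
since $\pi(q)$ is not an orbifold point. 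The point $q=0$ contributes $\tfrac1m\mathrm{ord}_0(\tilde f)\psi(p)=\mathrm{ord}_p(s)\psi(p)$, which is exactly the orbifold order. This yields $\mathrm{div}(s)$ as defined.

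I expect the main obstacle to be the convention for currents and the Laplacian on $X$ at orbifold points, not any analytic estimate. If instead one wants $\Delta$ on $D$ in the coordinate $z$, the alternative route is as follows. Observe that $\log\|s\|_h$ is in $L^1_{\mathrm{loc}}(D)$, since $\log|\tilde f|$ is locally integrable and invariant. Then compute $\Delta_z$ of $\log|z|^{\mathrm{ord}_0(\tilde f)/m}$ as $2\pi(\mathrm{ord}_0\tilde f/m)\delta_0$. To do this, write $\tilde f=w^{n}u(w)$ with $u$ nonvanishing near $0$; then $|u|$ is invariant, so it descends to a continuous function, harmonic off $0$ and bounded, hence harmonic by removable singularities. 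That would give the same result, and checking that the two conventions agree is the part to be careful about.
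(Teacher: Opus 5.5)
Your main argument is correct, but it puts the work in a different place from the paper. You \emph{define} the Laplacian on $X$ near an orbifold point through the chart, $\Delta v \defeq \frac1m\pi_*\Delta_w(\pi^*v)$. The theorem then becomes the classical Poincar\'e--Lelong formula for $\tilde f$ on $\widetilde D$ plus an orbit count for the divisor.

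The paper instead reads the left-hand side as the ordinary distributional Laplacian $\Delta_z$ in the coordinate $z=w^m$ of $D$ itself. Its one substantive step is the transfer identity $\pi_*\Delta_w\pi^*v = m\,\Delta_z v$, valid for $v\in L^1_{\text{loc}}(D)$ with $\pi^*v\in L^1_{\text{loc}}(\widetilde D)$. This is proved by pairing with $\varphi\in C_c^\infty(D)$, using $\Delta_w(\varphi\circ\pi)=|\pi'|^2(\Delta_z\varphi)\circ\pi$, and changing variables. Applied to $v=\log\|s\|_h/2\pi$, it turns the upstairs identity into the one on $D$.

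So the paper proves the intrinsic statement on the underlying Riemann surface. Your version is exactly the normalised descent $\widetilde\pi_*$ used in Proposition~\ref{propPhi}, and it suffices there.

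The transfer identity is precisely the ``check that the two conventions agree'' that you defer. It also shows where your convention needs care. The rule $\langle T,\psi\rangle=\frac1m\langle\widetilde T,\psi\circ\pi\rangle$ is right for measures. But a function $v$ on $D$, viewed as a density against $dA_z$, corresponds upstairs to $(v\circ\pi)|\pi'|^2$, not to $v\circ\pi$. You should therefore state the convention for currents acting on forms, $\langle T,\eta\rangle=\frac1m\langle\widetilde T,\pi^*\eta\rangle$; then $\widetilde T=\pi^*v$ is correct for functions.

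Your alternative route in the $z$-coordinate handles $\log|\tilde f|=\frac nm\log|z|+\log|u|$ correctly, but it omits the metric term. The weight $\phi$ descends to a function that is continuous but in general not smooth at $p$; for example $|w|^2=|z|^{2/m}$. You would still have to show that its $\Delta_z$ has no atom at $p$ and equals $4\pi\,c_1(\mathcal L,h)$ there. Applying the transfer identity to all of $\log\|s\|_h$ at once settles this automatically.
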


\begin{proof} 
This is a local statement. In a sufficiently small neighbourhood of a point $p$ of $X$ with orbifold chart $\pi \colon \widetilde D \rightarrow D$ of degree $m$ (possibly $m=1$, local uniformisers $w$ and $z$, as before), trivialise $\mathcal L$ using a frame $\kappa$ and write $\tilde s=\tilde f \cdot \kappa$ for a meromorphic function $\tilde f$ on $\widetilde D$. Choose the neighbourhood so small that $\tilde f = w^r g(w)$ for some integer $r$ and an invertible function $g$. We then have 
\[\log ||\tilde s||_h = \log | \tilde f| + \log ||\kappa||_h = \log | \tilde f| - \phi/2,\] and hence, plugging in the definitions,  
\begin{equation} \label{deltaw} \frac{\Delta_w}{2\pi} \pi^* \log ||s||_h = \frac{\Delta_w}{2\pi} \log |\tilde f| - \pi^* c_1(\mathcal L,h).\end{equation} 
We note that, if $u \in L^1_{\text{loc}}(D)$ and $\pi^* u  \in L^1_{\text{loc}}(\widetilde D)$ then, as distributions, \begin{equation} \label{deltadelta} \pi_* \Delta_w \pi^* u = m \Delta_z u.\end{equation} 
Indeed, integrating against a test function $\varphi \in C_c^\infty(D)$, the chain rule for $\varphi \circ \pi$ and a change of variables gives 
\[ \langle \pi_* \Delta_w (\pi^* u), \varphi \rangle = \int_{\widetilde D} (u \circ \pi) |\pi'|^2 (\Delta_z \varphi)\circ \pi = m \langle \Delta_z u, \varphi \rangle. \]
We apply the identity \eqref{deltadelta} to $u = \log ||s||_h/2\pi$; the integrability conditions for $u$ and $\pi^*u$ are satisfied since locally, $\pi^* u = r/2\pi \log |w| + b(w)$ with $b$ a smooth function.  
In this way, we find 
\[ \frac{\Delta_z}{2\pi} \log ||s|| = \pi_* \frac{\Delta_w}{2m \pi} \log |\tilde f| - c_1(\mathcal L,h).\]
The classical Poincar\'e--Lelong formula (\cite[Thm.~3.7.8]{Rans}) applied to $\tilde f$ on $\widetilde D$ gives $\Delta_w \log |\tilde f| / 2\pi = r \delta_0 = \mathrm{ord}_0(\tilde f) \delta_0$. Thus, with $\pi(0)=p$\ and $\mathrm{ord}_p(s)$ defined as $\mathrm{ord}_0(\tilde f)/m$, we find 
 \[ \frac{\Delta_z}{2\pi} \log ||s|| = \mathrm{ord}_p(s) \delta_p - c_1(\mathcal L,h)\]
 in a small neighbourhood of $p$, which implies the global result by summation. 
\end{proof} 

\subsection{The modular orbifold}  

The orbifold $Y(1)=[\Gamma(1) \backslash \Hh]$ is the stacky quotient surface $\Gamma(1) \backslash \Hh$, with exactly two orbifold points: the orbit $[i]$ of $i$ with order $m=2$, and the orbit $[\rho]$ of $\rho = e^{2 \pi i/3}$ with order $m=3$. The local orbifold charts are depicted, e.g., in \cite[p.~7]{Gunning}. For example, the part in $\F$ of a small disk around $i$ in $\Hh$ is mapped first to the upper part of a disk $\widetilde D$ around $0$ by $z \mapsto (z-i)/(z+i)$, and then $\widetilde D$ is mapped $2$-to-$1$ to a disk $D$ by squaring ($z=w^2$).

Suppose $k=2 \ell$ is even and $f$ is a modular form for $\Gamma(1)$ not vanishing at the cusp; then $f(z) \d z^{\otimes \ell}$ is a section of line bundle $\Omega_{\Hh}^{\otimes \ell}$ on $\Hh$. We descend  $\Omega_{\Hh}$ to an orbifold line bundle $\mathcal K$ on $Y(1)$ (since we did not include a growth condition at infinity, tensor powers of this bundle maybe have an infinite-dimensional space of sections, but this is not relevant for our analysis below, that is entirely local). 

Write $x=\mathrm{Re}(z)$ and $y=\mathrm{Im}(z)$. On $\mathcal K^{\otimes \ell}$ we put the hermitian metric induced by the $\Gamma(1)$-invariant Petersson norm $||f(z) \d z^{\otimes \ell}||_h \defeq |f(z)|y^\ell$ (also here, the global integrability need not concern us, since we use the metric locally). The local weight is $-2 \ell \log y$ and the Chern form is $\ell/2\pi \cdot \d x \d y/y^2$, the hyperbolic area form up to a constant. Applying the orbifold Poincar\'e--Lelong formula and cancelling the $y^\ell$-term, we find the following. 
\begin{lemma} \label{PLmodform}  If $f$ is modular form of even weight $k$, not vanishing at the cusp, then we have an equality of currents \[ \mathrm{div}(f) = \frac{\Delta}{2 \pi} \log |f|. \]
on the modular orbifold $Y(1)$. \hfill \qedsymbol
\end{lemma}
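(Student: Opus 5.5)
The plan is to apply Theorem \ref{pol} on $Y(1)$ with $\mathcal L = \mathcal K^{\otimes \ell}$, $k = 2\ell$, the Petersson metric, and the section $s = f(z)\,\d z^{\otimes \ell}$. First I would check that $s$ really is a meromorphic (indeed holomorphic) section of the orbifold bundle $\mathcal K^{\otimes \ell}$. Modularity of weight $k$ is exactly the statement that $f(z)\,\d z^{\otimes\ell}$ is $\Gamma(1)$-invariant on $\Hh$, since $\d(\gamma z) = (cz+d)^{-2}\d z$. Hence on the orbifold charts at $[i]$ and $[\rho]$ it is $\mu_m$-equivariant, and the orbifold order $\mathrm{ord}_x(s)$ agrees with $\mathrm{ord}_x(f)/\#\mathrm{Stab}_{\Gamma(1)}(x)$. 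This identifies $\mathrm{div}(s)$ with $\mathrm{div}(f)$ as defined in the introduction. I also need to check that the Petersson norm is $\mu_m$-invariant and smooth on the charts. Invariance follows from the $\Gamma(1)$-invariance of $|f(z)|y^\ell$, because $\mathrm{Im}(\gamma z) = y/|cz+d|^2$.

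Theorem \ref{pol} then gives
\[ \frac{\Delta}{2\pi}\log\big(|f| y^{\ell}\big) = \mathrm{div}(f) - c_1(\mathcal K^{\otimes \ell},h). \]
Next I compute the Chern form. Take the frame $\kappa = \d z^{\otimes\ell}$, so the local weight is $\phi = -2\ell\log y$. Then
\[ \Delta\phi = -2\ell\,\partial_y^2\log y = 2\ell/y^2, \]
which gives $c_1 = \frac{\ell}{2\pi}\,\d x\,\d y/y^2$. On the left-hand side, linearity splits off the term
\[ \frac{\Delta}{2\pi}\,\ell\log y = -\frac{\ell}{2\pi y^2}, \]
and this cancels $-c_1$ exactly. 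What remains is $\frac{\Delta}{2\pi}\log|f| = \mathrm{div}(f)$.

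The main subtlety is interpretational rather than computational. The expressions $\log|f|$ and $\log y$ are not individually $\Gamma(1)$-invariant, so they are not functions on $Y(1)$; only their combination $\log\|s\|_h$ is. I would therefore read the identity of currents locally. At non-orbifold points it is the classical Poincar\'e--Lelong formula in $\Hh$. At the orbifold points it should be read on the orbifold chart or through the pushforward identity \eqref{deltadelta}. Alternatively, I would interpret $\frac{\Delta}{2\pi}\log|f|$ as the current $\frac{\Delta}{2\pi}\log\|s\|_h + c_1$, which is well defined and equals the Euclidean distributional Laplacian of $\log|f|$ in any local coordinate on $\Hh$.

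Finally I must make sure that the Laplacians are taken with respect to the same coordinate $z$ on both sides. I would use $\Delta_z$ in the upper half plane coordinate, transported through the chart $z\mapsto (z-i)/(z+i)$ near $[i]$ and the analogous chart near $[\rho]$. These charts are conformal, so the Laplacian changes only by the factor $|{\rm derivative}|^2$. That factor is absorbed consistently by the $2$-form $\d x\,\d y$ on both sides, which yields the stated equality.
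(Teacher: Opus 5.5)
Your proposal is correct and follows the paper's own argument. You apply Theorem~\ref{pol} to $f(z)\,\d z^{\otimes \ell}$ with the Petersson metric, compute the Chern form $\frac{\ell}{2\pi}\,\d x\,\d y/y^2$ from the local weight $-2\ell\log y$, and cancel it against the Laplacian of the $\ell\log y$ term. Your extra checks (equivariance of the section and metric, matching of orbifold orders, and reading $\frac{\Delta}{2\pi}\log|f|$ locally, which the paper justifies by noting that $\log|f\circ\gamma|-\log|f|$ is harmonic) only make explicit what the paper leaves implicit.
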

To clarify  the formulation of the lemma, using directly the modular form $f$ instead of the corresponding section, note that although $\log |f|$ is not itself $\Gamma(1)$-invariant, $\Delta \log |f|$ is: for any $\gamma = \left( \begin{smallmatrix}  a & b \\ c & d \end{smallmatrix} \right) \in \Gamma(1)$, we have that the difference $\log |f(\gamma \cdot z)| - \log|f(z)| = k \log |cz+d|$ is harmonic, hence has vanishing Laplacian.

\section{Auxiliary results on the Hausdorff limit} \label{notaux} 

We recall the following notations and results from \cite{CCR}. 

\subsection{} \label{zerosincompact} Fix a congruence group $\Gamma$. \emph{For all $k$, $|Z_{k,\Gamma}|$ is contained in a compact domain $\DD_\Gamma \subset \Hh$,} see \cite[Thm.~A]{CCR}. A suitable $\DD_\Gamma$ can be determined precisely in terms of vanishing of so-called `Kluyver sums', which need not concern us here, but allows to make the results entirely concrete for any given congruence group. 

 Fix a sufficiently small connected open domain $\DD$ in $\Cc$ containing $\DD_\Gamma$. Note that $\DD$ may contain several $\Gamma(1)$-equivalent points; for example, $\DD$ will always contain points on the `right lower' arc $-\overline{\mathcal A}$ that is $\Gamma(1)$-equivalent to $\mathcal A$. it will later be important to eliminate the overcounting of zeros that this causes. 
  
 \subsection{} \label{hausdorfflimit} Fix a label $(a,b) \in (N^{-1}\Z/\Z)^2$ and $\gamma \in N_\Gamma$, set $F_k(z) \defeq E_{k,\gamma^{-1} \Gamma \gamma}^{a,b}$ regarded as a function in $\DD$, and rewrite this function in the form 
\[
F_k(z) = \sum_{\mathbf{v} \in \hat{\mathbb I}} \frac{c_{\mathbf v}}{f_{\mathbf v}(z)^k},
\]
where $f_{\mathbf v}(z) = mz + n$ is a family of linear polynomials, 
indexed by \[\hat{\mathbb{I}} \defeq \{\mathbf v =(m,n) \in (\Z_{> 0} \times \Z) \cup (\{ 0 \} \times \Z_{>0}) \colon c_{\mathbf v} \neq 0 \}. \]
This choice of indexing is important later; for example, since $k$ is even, if two non-zero terms $c_1/(mz+n)^k$ and $c_2/(-mz-n)^k$ for $m>0$ occur in the definition of the Eisenstein series, in the expression $F_k$ they are grouped into one term $(c_1+c_2)/f_{\mathbf v}(z)^k$ with $\mathbf v = (m,n)$, and $\mathbf v$ and $-\mathbf v$ do not both occur in $\hat{\mathbb{I}}$. 

 Write $\mathcal Z_k$ for the zero set of $F_k$ in $\DD$. Then there exists a finite subset $I \subset \hat{\mathbb{I}}$ such that the Hausdorff limit of these sets is 
\begin{align}  &\LH\limits_{\substack{k \rightarrow + \infty\\ k \in 2\Z_{\geq 2}}} \mathcal Z_k = Y \coloneqq \bigcup_{\substack{\mathbf{v}, \mathbf{w} \in I \\ \mathbf v \neq \mathbf w}}Y_{\mathbf v, \mathbf w}\nonumber \\ & \quad \text{with}\ \ Y_{\mathbf v,\mathbf w}  = \{ z \in \DD \colon |f_{\mathbf v}(z)| =|f_{\mathbf w}(z)| \leq  |f_{\mathbf v'}(z)| \mbox{ for all }\mathbf v' \in I  \}. \label{defY} \end{align} See \cite[Lemma~7.3.4]{CCR}. The finite subset $I$ can be determined explicitly in any given case, but this will not be needed in our work; see, however Subsection \ref{Phi} below.   

\subsection{} \label{geodesics} \emph{Non-empty solution sets to equations of the form $|f_{\mathbf v}(z)|=|f_{\mathbf w}(z)|$ in the complex upper half plane $\Hh$ for $\mathbf v=(m_1,n_1) \neq \mathbf \pm \mathbf w =  (m_2,n_2)$ in $\hat{\mathbb{I}}$ are geodesics.} More precisely, we can rewrite the defining equation as $|\sigma z|=1$ for $\sigma = \left(\begin{smallmatrix} m_1 & n_1 \\ m_2 & n_2 \end{smallmatrix} \right) \in \mathrm{M}_2(\Z) \cap \mathrm{GL}_2(\Q)$,  so the solution set is the image under $\sigma^{-1}$ of the unit circle.  In particular, $Y$ is contained in a finite number of \emph{geodesic segments}, meaning closed connected non-singleton subsets of a hyperbolic geodesic in the upper half plane. See \cite[Rem.~7.3.3]{CCR}.

\subsection{} \label{subharm} All $F_k$ are analytic on $\DD$; hence  \[ H_k(z) \defeq \frac{1}{k} \log |F_k(z)| \] 
is a well-defined function $H_k \colon \DD \rightarrow \R \cup \{-\infty\}$, continuous subharmonic on $\DD$ \cite[Thm.~2.2.2]{Rans} and taking the `value' $- \infty$ on $\mathcal Z_k$. Furthermore, $H_k$ is uniformly upper bounded on $\DD$. See \cite[Lemma~7.3.7]{CCR}. 

\subsection{}  \label{Phi} Define the function  $\Phi \colon \DD \rightarrow \R$ by 
\[ \Phi(z) = \max_{\mathbf v \in \hat{\mathbb{I}}} \{ - \log |f_{\mathbf v} (z)| \}. \]  
The following is a more convenient description of $\Phi(z)$. 
\begin{lemma} There exists a finite set $I$, depending only on $\Gamma$, such that 
\begin{equation} \label{Phifinite} \Phi(z) = \max_{\mathbf v \in I} \{ - \log |f_{\mathbf v} (z)| \}. \end{equation} 
\end{lemma}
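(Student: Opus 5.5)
We have to show that the supremum of $-\log|f_{\mathbf v}(z)| = -\log|mz+n|$ over $\mathbf v=(m,n)\in\hat{\mathbb I}$ is attained on a finite subset $I$, uniformly for $z\in\DD$. Equivalently, $\min_{\mathbf v\in\hat{\mathbb I}}|mz+n|$ is realised by finitely many $\mathbf v$. The plan is a lattice-point argument: $|mz+n|$ is large unless $(m,n)$ lies in a bounded region, and $\DD$ stays away from the real axis.

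\textbf{Step 1: a uniform lower bound.} Since $\DD$ is a small open neighbourhood of the compact set $\DD_\Gamma\subset\Hh$ from Subsection~\ref{zerosincompact}, we may (shrinking $\DD$ if needed) assume $\overline{\DD}$ is a compact subset of $\Hh$. Then there are constants $0<y_0\leq |z|\leq R$ and $\mathrm{Im}(z)\geq y_0$ for all $z\in\overline\DD$. For $\mathbf v=(m,n)$ we have the two estimates
\[ |mz+n|\geq |m|\,\mathrm{Im}(z)\geq |m|y_0 \quad\text{and}\quad |mz+n|\geq |n|-|m|R. \]
Hence $|mz+n|\geq C\max(|m|,|n|)$ for some $C>0$ depending only on $\DD$. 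To see this, split into the cases $|n|\leq 2R|m|$ and $|n|>2R|m|$.

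\textbf{Step 2: a reference upper bound.} Fix any $\mathbf v_0\in\hat{\mathbb I}$; the set is nonempty since $F_k\not\equiv 0$. Set $M=\sup_{z\in\overline\DD}|f_{\mathbf v_0}(z)|<\infty$. Define
\[ I\defeq\{\mathbf v\in\hat{\mathbb I}\colon C\max(|m|,|n|)\leq M\}. \]
This set is finite, and it is nonempty because it contains $\mathbf v_0$ (up to adjusting constants, or after enlarging $I$ by $\mathbf v_0$). For any $\mathbf v\notin I$ and any $z\in\DD$, Step 1 gives $|f_{\mathbf v}(z)|>M\geq|f_{\mathbf v_0}(z)|$. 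So such $\mathbf v$ never attain the maximum in the definition of $\Phi$. The supremum over the infinite set $\hat{\mathbb I}$ therefore equals the maximum over the finite set $I$, which is \eqref{Phifinite}. As a byproduct, $\Phi$ is a genuine finite maximum and hence continuous.

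\textbf{Step 3: independence of choices.} It remains to check that $I$ depends only on $\Gamma$. The set $\hat{\mathbb I}$ depends on the label $(a,b)$ and on $\gamma\in N_\Gamma$, and there are only finitely many of each. The domain $\DD$ is fixed in terms of $\Gamma$. So one may take the union of the finitely many sets $I$ obtained above.

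The main (mild) obstacle is Step 1. The point is to make sure $\overline\DD$ is bounded away from $\R$ and from $\infty$, so that the estimate is uniform. This is exactly what the compactness of $\DD_\Gamma$ in $\Hh$ and the choice of $\DD$ as sufficiently small provide. One should also note that the coefficients $c_{\mathbf v}$ play no role, because membership in $\hat{\mathbb I}$ only requires $c_{\mathbf v}\neq0$.
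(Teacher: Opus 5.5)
Your Steps 1--2 are correct and are essentially the paper's argument. On a box in $\Hh$ containing $\DD$, any $(m,n)$ outside a bounded region has $|mz+n|$ larger than the modulus of a fixed reference form. The only difference is the reference form. You use an arbitrary $\mathbf v_0\in\hat{\mathbb I}$ with $M=\sup_{\overline\DD}|f_{\mathbf v_0}|$, whereas the paper uses the constant form $(0,C)\in\hat{\mathbb I}$ taken from the proof of \cite[Lemma~7.3.4]{CCR}, so your version is slightly more self-contained. Also, $\mathbf v_0\in I$ holds automatically by Step 1, so no hedging is needed there.

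Step 3 contains an error. Taking the literal union of the sets $I$ over all $(a,b,\gamma)$ can change the maximum, because the union may contain $\mathbf v$ with $c_{\mathbf v}=0$ for the series at hand. For example, for $\Gamma=\Gamma(N)$ with $4\mid N$, $c_{(0,1)}=2\cos(2\pi a)$ vanishes exactly when $a=\pm1/4$. You would therefore have to intersect the union with the relevant $\hat{\mathbb I}$. It is simpler to drop Step 3: in the paper $(a,b,\gamma)$ is fixed, and the content of the lemma is that $I$ does not depend on $z$ or $k$.
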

\begin{proof} Suppose $\DD$ belongs to a box \[\DD\subseteq \{ z \colon (\mathrm{Re}(z), \mathrm{Im}(z)) \in [-M,M] \times [y_1,y_2] \}\] for some $M,y_1,y_2 \in \R_{>0}$. The proof of \cite[Lemma 7.3.4]{CCR} shows that $(0,C) \in \hat{\mathbb{I}}$ for some positive integer $C$. It follows easily that if $z=x+iy \in \DD$ and $|f_{(m,n)}(z)| = |mz+n| \leq |f_{(0,C)}| = C$, then $|m| y_1 \leq |m| y = |\mathrm{Im}(mz+n)| \leq C$ and $|n| \leq |m x + n| + |m x| \leq C + M|m|$. Thus, choosing 
\[ I \defeq \{ (m,n) \in \hat{\mathbb{I}} \colon |m| \leq C/y_1 \mbox{ and } |n| \leq C(1+M/y_1)\}; \]
we have that for every $\mathbf v = (m,n) \notin I$, $|f_{\mathbf v}(z)| > C = |f_{(0,C)}(z)|$ for every $z \in \DD$, with $(0,C) \in I$. 
\end{proof}

Now $\Phi$ is subharmonic by \cite[Thm.~2.2.3]{Rans}. Then 
\emph{$H_k(z)$ converges to $\Phi(z)$ uniformly in compact subsets of $\DD \setminus Y$, in particular, in} $L^1_{\text{loc}}(\DD \setminus Y)$, see \cite[Lemma~7.3.8]{CCR}. 

\section{Potential theory and measure convergence} \label{ptmc}

In this section, we find an expression of the limit of the orbifold counting measure in terms of functions of the type of the function $\Phi$ from \eqref{Phifinite}. We will use (more) potential theory in the complex plane, and refer to \cite{Rans} for the basic results on that topic. 

\subsection{} \label{extend} By Subsection \ref{subharm}, $H_k$ is subharmonic and uniformly upper bounded on $\DD$. Furthermore, no subsequence of $\{H_k\}$ converges locally uniformly to $-\infty$, since in any connected domain we can choose $z_0 \notin Y$, with $H_k(z_0)$ converging to $\Phi(z_0)>-\infty$. Now the compactness theorem for subharmonic functions \cite[Thm.~3.2.12]{HormConv} implies that every subsequence of $\{H_k\}$ has a subsequence converging to some subharmonic function $\Theta$ on $\DD$. Since $Y$, as union of finitely many geodesic segments, has Lebesgue measure zero, and the convergence of $H_k$ to $\Phi$ is locally uniform on the complement of $Y$, we find that $\Phi = \Theta$ almost everywhere on $\DD$, and then by the Weak Identity Principle for subharmonic functions \cite[Thm.~2.7.5]{Rans}, we find that $\Phi=\Theta$ on all of $\DD$, for any such $\Theta$ arising from the converging subsequence. We conclude that 
\begin{equation} \label{conv} H_k \rightarrow \Phi \mbox{ in }L^1_{\text{loc}}(\DD). \end{equation} 

\subsection{} The functions $F_k$ are not identically zero, and the functions $\log |F_k|$ are in $L^1_{\text{loc}}(\DD)$. Therefore, we can consider these functions as distributions in $\mathcal D'(\DD)$ (see, e.g., \cite[\S 6.11]{Rudin}) and we can apply the distributional Laplacian $\Delta = 4 \partial_z \partial_{\bar z}$ to them (viz., $\Delta f$ is defined by the identity $\langle \Delta f, \varphi \rangle = \langle f, \Delta \varphi\rangle$ for any test function $\varphi$). Applying this to both sides of \eqref{conv}, we conclude via \cite[Thm.~6.17]{Rudin} that 
\begin{equation} \label{convdist} \frac{\Delta}{2\pi} H_k \rightarrow  \frac{\Delta}{2\pi}  \Phi \mbox{ as distributions in }\mathcal D'(\DD). \end{equation} 
Since the functions $H_k$ and $\Phi$ are subharmonic, their Laplacians are positive distributions on $\DD$ \cite[Thm.~2.4.4]{Rans}, so they correspond to positive Radon measures in the Riesz(--Markov--Kakutani) representation theorem \cite[Thm.~A.3.2]{Rans}, and the convergence as distributions implies vague convergence of measures, i.e., convergence of any integral of a continuous compactly supported function (one may pass from convergence against smooth compactly supported test functions to continuous compactly supported test functions by uniform approximation on any fixed larger compact set). 

\subsection{} We now switch from fixing $a,b,\gamma$ and an individual Eisenstein series $F_k$ back to varying the data, and accordingly indicate the corresponding limit function as $\Phi_{\gamma^{-1} \Gamma \gamma}^{(a,b)}$. Looking at the product $\mathcal N_{k,\Gamma}$, by adding up \eqref{convdist} over all $a,b,\gamma$ we find 
\begin{equation} \label{fullconv}\lambda_k \defeq \frac{1}{k} \frac{\Delta}{2\pi} \log |\mathcal N_{k,\Gamma}|   \rightarrow \lambda \defeq \sum_{\substack{(a,b) \in (N^{-1}\Z/\Z)^2 \\ \gamma \in N_\Gamma}} \frac{\Delta}{2\pi} \Phi_{\gamma^{-1} \Gamma \gamma}^{(a,b)} \mbox{ in } \mathcal D'(\DD). \end{equation} 

Notice that the current $\lambda_k$ is invariant under the action of $\gamma = \left( \begin{smallmatrix}  a & b \\ c & d \end{smallmatrix} \right) \in \Gamma(1)$, since the difference $ \log |\mathcal N_{k,\Gamma}(\gamma \cdot z)| -  \log |\mathcal N_{k,\Gamma}(z)| = c_\Gamma \log | cz+d|$ is harmonic. Passing to the limit in \eqref{fullconv}, $\lambda$ inherits the same chart compatibility property: if $V \subseteq \DD$ is an open subset for which also $\gamma \cdot V \subseteq \DD$ for some $\gamma \in \Gamma(1)$, then for any compactly supported test function $\varphi$ on $\gamma \cdot V$, \[\langle \lambda|_{\gamma \cdot V}, \varphi \rangle = \lim\, \langle \lambda_k|_{\gamma \cdot V}, \varphi \rangle = \lim\, \langle \lambda_k|_V, \varphi \circ \gamma \rangle = \langle \lambda|_V, \varphi \circ \gamma \rangle.\]

\subsection{} We now wish to descend this result from $\DD$ to the modular orbifold. There are two things to take care of: (a) the fact that distinct points in $\DD$ can map to the same point on $Y(1)$, and the pushforward measure on $Y(1)$ should not double count these; (b) the fact that $Y(1)$ has orbifold points. Concerning point (a), if $D$ contains a small open disk $V_2$ in $\DD$ around a point on the right arc $-\overline{\mathcal A}$, the lower part does not belong to $\F$, and the disk is $\Gamma(1)$-equivalent to a disk $V_1$ centered at the corresponding point on the left arc $\mathcal A$. Now both of these disks together cover the corresponding open neighbourhood of the image (non-orbifold) point in $Y(1)$, whose measure in $Y(1)$ should then be $1/2$ the sum of the (equal) measures of the two disks in $\DD$, not merely their sum. See Figure \ref{orbequiv} for a corresponding picture. A similar analysis works for disks around points with real part $-1/2$ that have a $\Gamma(1)$-equivalent disk in $\DD$ around a point with real part $1/2$. Concerning point (b), as always, we should divide the measure by the order of the orbifold point. 

\begin{figure}[t]
 \centering
  \includegraphics[width=8cm]{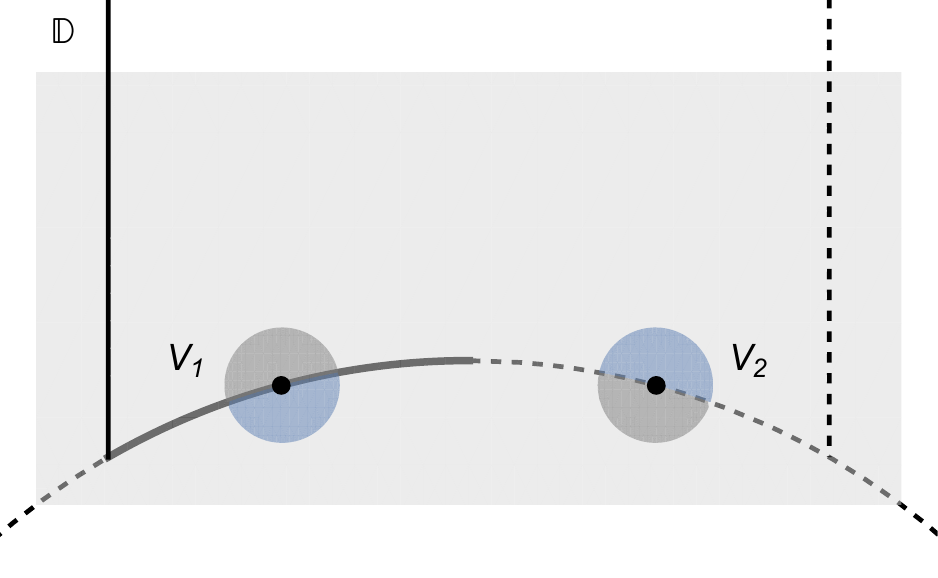}
\caption{A compact domain $\mathbb D$ enclosing the fundamental domain $\F$ and two $\Gamma(1)$-equivalent disks $V_1,V_2$ inside $\mathbb D$ centered on the boundary of $\overline \F$, with equivalent points shaded in the same grayscale.} \label{orbequiv}
  \end{figure}

Thus, we define the \emph{normalised descent map} $\widetilde \pi_*$ of a $\Gamma(1)$-invariant measure $\lambda$ from $\DD$ to $Y(1)$ at a point $p \in Y(1)$ with open neighbourhood $U$ by   
\[ (\widetilde \pi_* \lambda)|_U \defeq \frac{1}{m} \pi_*(\lambda|_{\tilde U}), \] 
if $\tilde U \rightarrow U$ is of degree $m$, where, for $p$ a non-orbifold point,  $\tilde U = \pi^{-1}(U) \subseteq \DD$ has $m$ sheets (e.g., in the example of Figure \ref{orbequiv}, $\tilde U = V_1 \sqcup V_2$ and $m=2$), and, for $p$ an orbifold point, $\tilde U \rightarrow U$ is the corresponding orbifold chart given by the action of an $m$-th root of unity (e.g., $m=2$ at $p=[i] \in Y(1)$). 

\begin{proposition} \label{propPhi} We have a weak convergence of measures 
\[ \mu_{k,\Gamma} \rightarrow \mu_{\infty, \Gamma} \defeq  \frac{6}{c_\Gamma \pi}  \widetilde \pi_* \hspace*{-5mm} \sum_{\substack{(a,b) \in (N^{-1}\Z/\Z)^2 \\ \gamma \in N_\Gamma}} \hspace*{-5mm} \Delta\Phi_{\gamma^{-1} \Gamma \gamma}^{(a,b)} \]
on the orbifold $Y(1)$, where $\widetilde \pi_*$ indicates the normalised descent map to the orbifold. 
\end{proposition}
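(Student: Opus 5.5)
The plan is to combine Lemma \ref{PLmodform} with the distributional convergence \eqref{fullconv} and then descend to $Y(1)$ using $\widetilde\pi_*$.

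\textbf{Step 1: identify $\mu_{k,\Gamma}$ with a normalised $\lambda_k$.} Apply Lemma \ref{PLmodform} to $f=\mathcal N_{k,\Gamma}$, which has weight $c_\Gamma k$ and does not vanish at the cusp. This gives $\mathrm{div}(\mathcal N_{k,\Gamma}) = \frac{\Delta}{2\pi}\log|\mathcal N_{k,\Gamma}|$ as currents on $Y(1)$. In terms of the planar current $\lambda_k$ on $\DD$, this says $\widetilde\pi_*\lambda_k = \frac1k Z_{k,\Gamma}$.

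To check this, work chart by chart. Near an orbifold point with chart of degree $m$, the identity \eqref{deltadelta} together with the orbifold order (ordinary order divided by $m$) gives exactly the factor $\frac1m\pi_*$. Near a non-orbifold point covered by $m$ sheets of $\DD$, the $\Gamma(1)$-invariance of $\lambda_k$ shows that each sheet carries the same mass, so dividing by $m$ removes the overcounting. Since $|Z_{k,\Gamma}|$ is contained in $\DD_\Gamma\Subset\DD$ (Subsection \ref{zerosincompact}), and $\DD$ together with its sheets covers a neighbourhood of the image of $\DD_\Gamma$, no zeros are lost.

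Dividing by $\deg Z_{k,\Gamma} = c_\Gamma k/12$ from \eqref{valence} then gives
\[ \mu_{k,\Gamma} = \frac{12}{c_\Gamma}\,\widetilde\pi_*\lambda_k .\]

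\textbf{Step 2: vague convergence on $Y(1)$.} By \eqref{fullconv}, $\lambda_k\to\lambda$ vaguely on $\DD$, since these are positive measures and distributional convergence upgrades as explained there. The limit $\lambda$ inherits the chart compatibility shown above, so $\widetilde\pi_*\lambda$ is well defined.

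Take $\varphi\in C_c(Y(1))$ and use a partition of unity subordinate to the charts $U$. For each chart, $\langle \widetilde\pi_*\lambda_k,\varphi\rangle = \frac1m\langle\lambda_k|_{\tilde U},\varphi\circ\pi\rangle$, where $\varphi\circ\pi$ is compactly supported in $\tilde U\subseteq\DD$. For the orbifold charts, the pullback to the planar coordinate near $i$ or $\rho$ is a compactly supported continuous function on a disk in $\DD$. Passing to the limit chart by chart gives
\[ \mu_{k,\Gamma}\to \frac{12}{c_\Gamma}\widetilde\pi_*\lambda = \frac{6}{c_\Gamma\pi}\widetilde\pi_*\sum \Delta\Phi^{(a,b)}_{\gamma^{-1}\Gamma\gamma} \]
vaguely on $Y(1)$.

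\textbf{Step 3: tightness and weak convergence.} The measures $\mu_{k,\Gamma}$ are probability measures supported in the fixed compact set $\pi(\DD_\Gamma)\subset Y(1)$, so the family is tight. A vague limit of a tight sequence of probability measures is a probability measure, and the convergence is then weak. In particular, the limit has total mass $1$.

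\textbf{Main obstacle.} The delicate point is the bookkeeping in Step 1 at the boundary of $\F$ and at the orbifold points. One has to make sure that the normalised descent $\widetilde\pi_*$ applied to the planar current reproduces exactly the orbifold weights $1/\#\mathrm{Stab}$, with no double counting between $\mathcal A$ and $-\overline{\mathcal A}$ or between $\Re z=\pm1/2$.

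I would handle this by choosing $\DD$ as a union of finitely many such charts and checking the degree of each $\tilde U\to U$ directly. At orbifold points, the identity \eqref{deltadelta} supplies the factor $m$ relating the planar Laplacian to the orbifold one.
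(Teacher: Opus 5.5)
Your proposal is correct and takes essentially the same route as the paper. Like the paper, you identify $\mu_{k,\Gamma}=\frac{12}{c_\Gamma}\widetilde\pi_*\lambda_k$ via Lemma \ref{PLmodform} and the valence formula, pass to the limit chart by chart using \eqref{fullconv} and the chart compatibility of $\lambda$, and upgrade vague to weak convergence by tightness; the only difference is that the paper proves convergence of $\widetilde\pi_*\lambda_k$ before identifying it with $\mu_{k,\Gamma}$, and your explicit remark that the limit automatically has total mass $1$ is a small extra.
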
 

\begin{proof}  

For a test function $\varphi \in C_c^\infty(U)$ with $\tilde U \rightarrow U \subseteq Y(1)$ of degree $m$, 
\[\langle \widetilde \pi_* \lambda_k, \varphi \rangle = \frac{1}{m} \langle \lambda_k, \varphi \circ \pi \rangle \rightarrow \frac{1}{m} \langle \lambda, \varphi \circ \pi \rangle = \langle \widetilde \pi_* \lambda, \varphi \rangle.\] 

The measure $\widetilde \pi_* \lambda_k$ is amenable to application of Lemma \ref{PLmodform}, resulting, after multiplication with  $12/c_\Gamma$, in the indicated orbifold divisor $\frac{12}{kc_\Gamma} \mathrm{div}(\mathcal N_{k,\Gamma})$ that indeed corresponds,  by the valency formula \eqref{valence}, to the measure $\mu_{k,\Gamma}$ from the introduction. A zero in $\DD$ of order $r$ contributes a weight $12r/(c_\Gamma k m)$ to $\mu_{k,\Gamma}$. At a non-orbifold point with $m$ preimages, the $m$ equivalent lifts cancel the division by $m$, and at orbifold points, this is adjusted by the order of the stabiliser. 

Because the convergence before normalised descent is in $\mathcal D'(\DD)$, we arrive at vague convergence of measures on $Y(1)$. But the measures $\mu_{k,\Gamma}$ and their distributional limit $\mu_{\infty, \Gamma}$ are supported on a common compact subset of $Y(1)$ (enclosing the finitely many geodesic segments occurring in the Hausdorff limit), showing the tightness of the collection $\{\mu_{k,\Gamma}\}$. Hence the vague convergence in $\mathcal D'(\DD)$ implies weak convergence of measures on $Y(1)$, since `tightness plus vague convergence' implies weak convergence, see, e.g., \cite[Lemma 6.21]{Kallenberg}. 
\end{proof} 

\section{Identifying the limit measure locally on a geodesic segment in terms of Haar measure} \label{lmhaar}

We now identify the function on the right-hand side of the equation in Proposition \ref{propPhi} concretely in terms of hyperbolic geometry, using the bijection between $Y(1)$ and $\F$, in particular, between geodesics. For this, we switch back to the local picture, where $a,b,\gamma$ is fixed and left out of the notation. The support of the limiting measure for $F_k$ is the union of geodesic segments $Y_{\mathbf v,\mathbf w}$ of the form given in \eqref{defY}. As in the introduction, each such segment $E_\sigma$ is contained in the geodesic
$
\{z\in\mathbb H\mid |\sigma\cdot z|=1\},$
where $\sigma\in \mathrm{M}_2(\Z)\cap\mathrm{GL}_2(\Q)$ is the matrix constructed from $\mathbf v,\mathbf w$ in the notation of Subsection \ref{geodesics}. The segment $E_\sigma$ corresponds under $\sigma$ to a connected segment $I_\sigma$ of the complex unit circle.

\subsection{} We base this on a standard technique for analysing the distributional Laplacian of a jump function along a curve (see, e.g., \cite[Thm.~7.51]{Appel}); rather than specialising the general result to our special situation, we deduce it in a few lines from the Green--Stokes theorem. For simplicity, we fix $\sigma$ throughout and suppress the index from $E_\sigma$ (i.e. $E=E_\sigma$). Consider the function \[h(z) = -\log |f_{\mathbf v}(z)| + \log |f_{\mathbf w}(z)| = - \log |\sigma \cdot z|\] 
in an open neighbourhood $U$ of the interior $\mathring{E}$ of $E$, so that in $U$, $\mathring{E}$ is the zero locus of $h$, which divides $U\setminus \mathring{E}$ into $U^+ \defeq \{ z \in U \colon h(z)>0\}$ and $U^- \defeq \{ z \in U \colon h(z)<0\}$. Notice that on $U$, 
\begin{equation} \label{starPhi} \Phi(z) = - \log |f_{\mathbf{w}}(z)| + \frac{1}{2}(h + |h|). \end{equation} 
Indeed, the maximum in the definition of $\Phi$ can be realised by more than two indices $\mathbf v$ at only finitely many points. This is due to the fact that the $\{f_{\mathbf v}\}$ are linear forms, meaning that a point $z_0$ where  $|f_{\mathbf v'}(z_0)|=|f_{\mathbf v}(z_0)|=|f_{\mathbf w}(z_0)|$ for a third index $\mathbf v' \notin \{\pm \mathbf v, \pm \mathbf w\}$ belongs to two distinct geodesics described by $|\sigma\cdot z| = 1$ and $|\widetilde \sigma \cdot z|=1$ for two M\"obius transformations $\sigma, \widetilde \sigma$, and these intersect in only finitely many points. Furthermore, the intersection is transversal in the upper half plane. Hence along $E$, the quantity $|f_{\mathbf v'}|-|f_{\mathbf w}|$ changes sign at a triple point, so $f_{\mathbf v'}$ has smaller modulus on one side, and that side cannot belong to $Y_{\mathbf v, \mathbf w}$.  Hence triple points are boundary points of $E$.  

Since $ \log |f_{\mathbf{w}}(z)| $ and $h(z)$ are harmonic on $U$, so have vanishing Laplacian, we conclude from \eqref{starPhi} that 
$\Delta \Phi = \frac{1}{2} \Delta |h|$, which we can compute as a distribution using the Green-Stokes theorem, as follows: let $\varphi \in C_c^\infty(U)$ be a test function; then 
\[ \langle \Delta |h|, \varphi \rangle = \int_{U^+} h \Delta \varphi - \int_{U^-} h \Delta \varphi, \]
and, since $\Delta h = 0$,  
\[ \int_{U^+} h \Delta \varphi = \int_{U^+} h \Delta \varphi - \varphi \Delta h = \int_{\partial U^+} h \partial_n \varphi - \varphi \partial_n h, \]
where $\partial_n$ denotes the outer normal derivative. 
Since $\varphi$ is a test function with compact support the function itself as well as all its derivatives vanish in a neighbourhood of $\partial U^+ \setminus E$; so, in particular, it has outer normal derivative zero there. Furthermore, $h$ is zero on $E$, so what remains is 
\[ \int_{U^+} h \Delta \varphi = - \int_{E}  \varphi \partial_n h; \mbox{ and, similarly, }\int_{U^-} h \Delta \varphi = \int_{E}  \varphi \partial_n h, \]
and hence
\begin{equation} \label{ringE} \Delta \Phi = \frac{1}{2} \Delta |h| = - \partial_n h  = \partial_n \log |\sigma \cdot z| \mbox{ along } \mathring{E}. \end{equation} 

\subsection{} \label{noatoms} Since we have established \eqref{ringE} only on the interior of $E$, we need to consider the finitely many boundary points of $E$. We claim that the measure $\mu \defeq \Delta \Phi$ does not carry any atoms, which suffices for the purpose of extending the identity \eqref{ringE} across $E$: indeed, the identity of measures is given by integrating over $E$ against a test function, and without atoms, the integral over $E\setminus \mathring{E}$ is zero for $\mu$, and it is obviously zero for the argument measure.

 To prove the absence of atoms, note that $\Phi$ is Lipschitz on any compact in $\Hh$ since any $-\log |f_{\mathbf v}|$ is smooth with bounded gradient; say the Lipschitz constant is $L$. Choose a smooth, non-negative, radially symmetric compactly supported function $\chi$ on $B(0,2)$ that is identically $1$ on $B(0,1)$. 
For sufficiently small closed ball $\B \defeq B(p,r)$ of radius $r$ around a point $p \in D$ set $\chi_\B(z) = \chi((z-p)/r)$, so $||\Delta \chi_\B||_{L^1(D)} = ||\Delta \chi||_{L^1(\Cc)}$. Now $\B$ will have non-negative measure $\mu(\B) \leq \langle \Delta \Phi, \chi_\B \rangle$. With $\int \Delta \chi_\B =0$ by the divergence theorem, we find \[\mu(\B) \leq \int \Phi \Delta \chi_\B = \int (\Phi-\Phi(p)) \Delta \chi_\B \leq 2 L r  ||\Delta \chi||_{L^1(\Cc)},\] which tends to zero as the radius $r$ tends to zero. Hence by uppercontinuity of the measure $\mu$, we find $\mu(\{p\})=0$ for all points, and $\Delta \Phi$ has no atoms. 

\subsection{} The Cauchy--Riemann equations for the holomorphic function $\log \sigma \cdot z = \log |\sigma \cdot z| + i \arg(\sigma \cdot z)$ in $D$ give $\partial_n \log |\sigma\cdot z| = \partial_t \arg(\sigma\cdot z)$, where $\partial_t$ denotes the tangential derivative, so that finally, we find the following. 
\begin{proposition} \label{haarprop} Assume that $E_\sigma$ is a geodesic segment occurring in the limit zero set of an Eisenstein series for $\Gamma$, with corresponding limit function $\Phi$. Suppose that the corresponding geodesic has equation $|\sigma \cdot z|=1$ for some M\"obius transformation $\sigma \in  \mathrm{M}_2(\Z) \cap \mathrm{GL}_2(\Q)$, and $E_\sigma$ corresponds via $\sigma$ to a segment $I_\sigma$ on the unit circle. Then  
\begin{equation} \label{haar} \frac{1}{2\pi} \Delta \Phi |_{E_\sigma} = \frac{1}{2\pi} |\d \arg(\sigma \cdot z)|_{E_\sigma} = \sigma^* \frac{\d\theta}{2\pi}|_{E_\sigma} = \sigma^* \left(\mu_{S^1}|_{I_\sigma}\right), \end{equation}
which is the pullback of the normalised Haar measure from the unit circle restricted to the interval $I_\sigma$. \hfill \qedsymbol
\end{proposition}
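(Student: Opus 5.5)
The plan is to combine the jump formula \eqref{ringE} with the Cauchy--Riemann identity just stated, then settle the endpoints of $E_\sigma$ using the absence of atoms from Subsection \ref{noatoms}. The remaining equalities are changes of variables.

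First, on the interior $\mathring E_\sigma$, \eqref{ringE} gives $\Delta\Phi = \partial_n \log|\sigma\cdot z|\,\d s$, where $\d s$ is arclength along $\mathring E_\sigma$. Here $n$ is the unit normal pointing from $U^+$ to $U^-$. Since $h=-\log|\sigma\cdot z|$, $U^-$ is the side where $|\sigma \cdot z|>1$, so this derivative is nonnegative, as positivity of $\Delta\Phi$ requires. Writing $\log(\sigma\cdot z)$ locally as a holomorphic branch, the Cauchy--Riemann equations give $\partial_n\log|\sigma\cdot z| = \partial_t \arg(\sigma\cdot z)$ for the compatibly oriented unit tangent $t$. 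Hence $\Delta\Phi|_{\mathring E_\sigma} = |\d\arg(\sigma\cdot z)|$. The absolute value only records that the orientation was chosen to make the density positive. On the geodesic, $|\sigma\cdot z|=1$, so $\arg(\sigma\cdot z)$ is a strictly monotone local coordinate. It is precisely the angle $\theta$ of the image point $\sigma\cdot z\in S^1$, so $|\d\arg(\sigma\cdot z)| = \sigma^*\d\theta$ on $E_\sigma$.

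Second, extend the identity from $\mathring E_\sigma$ to $E_\sigma$. By Subsection \ref{noatoms}, $\Delta\Phi$ has no atoms, and the argument measure plainly has none either. Since $E_\sigma\setminus\mathring E_\sigma$ consists of finitely many points, both measures give it mass zero, so the identity holds on $E_\sigma$. Dividing by $2\pi$ and noting that $\sigma$ maps $E_\sigma$ bijectively onto $I_\sigma$ yields $\sigma^*(\d\theta/2\pi)|_{E_\sigma} = \sigma^*(\mu_{S^1}|_{I_\sigma})$.

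The main obstacle is the legitimacy of \eqref{starPhi} on a neighbourhood $U$ of $\mathring E_\sigma$. One needs that only $f_{\mathbf v}$ and $f_{\mathbf w}$ realise the maximum near interior points. This follows from the finiteness in \eqref{Phifinite} together with the argument that triple points are transversal intersections of distinct geodesics and hence are endpoints. The local computation must also be done in a small enough neighbourhood of each interior point that no other linear form competes; I would cover $\mathring E_\sigma$ by such neighbourhoods and glue using a partition of unity. A secondary subtlety is the sign and orientation bookkeeping: checking that the side $U^-$ really corresponds to the outward normal convention in the Green--Stokes step, so that the resulting density is $+|\d\arg|$ rather than its negative.
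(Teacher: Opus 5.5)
Your proposal is correct and follows the paper's route. You use the jump formula \eqref{ringE} obtained from Green--Stokes on $U^\pm$, the Cauchy--Riemann identity $\partial_n\log|\sigma\cdot z| = \partial_t\arg(\sigma\cdot z)$, and the no-atoms result of Subsection \ref{noatoms} to pass from $\mathring{E}_\sigma$ to $E_\sigma$, with the same triple-point argument justifying \eqref{starPhi}; your sign check (normal pointing from $U^+$ into $U^-$, where $|\sigma\cdot z|>1$) matches the paper's outer-normal convention, and gluing via a partition of unity is only a cosmetic variant of working on a single neighbourhood $U$.
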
 

\begin{proof}[Proof of Main Theorem] 
The main theorem follows by using Proposition \ref{haarprop} for every term occurring in the right hand side sum of Proposition \ref{propPhi}. This is a finite sum of measures without atoms and these properties descend to $Y(1)$. 
\end{proof} 

\begin{remark} Equation \eqref{haar} is particularly easy to evaluate. If $z_1,z_2 \in E$, denote by $E[z_1,z_2]$ the unoriented subsegment of $E$ between the points $z_1$ and $z_2$. Then 
\[\frac{\Delta}{2\pi} \Phi (E[z_1,z_2])= \frac{1}{2\pi} \Big| \arg\Big(\frac{\sigma\cdot z_2}{\sigma \cdot z_1} \Big) \Big| = \frac{1}{2\pi} \arccos\Big(\Re\Big(\frac{\sigma \cdot z_2}{\sigma \cdot z_1} \Big)\Big),\]
where the result is in $[0,\pi]$. 
\end{remark} 

In the following examples, we will identify $Y(1)$ with $\F$, as before, and draw the geodesic segment in the limit inside $\F$. 

\begin{example} \label{gamman} We consider the example of a principal congruence group $\Gamma = \Gamma(N)$ for $N$ not divisible by $4$, in which case the Hausdorff limit was computed explicitly in \cite[\S 7.4]{CCR}. The Hausdorff limit of $|Z_{k,\Gamma(N)}|$ corresponds to the lower arc of the standard fundamental domain, $c_{\Gamma(N)}=N^2$, and our main result proves that the orbifold zero counting measure on  $Y(1)$ converges to 12 times the restriction of the Haar measure $|\d\theta|/2\pi$ to that segment of the unit circle: 
 \[\boxed{ \mu_{k,\Gamma(N)} \rightarrow \frac{6}{\pi} |\d\theta|_{\theta \in [\pi/2,2 \pi/3]}  \ \ (4 \nmid N).} \]
The segment having precisely arc length $\pi/6$, the limit is indeed a probability measure. 
\end{example} 

\begin{example} \label{gamman4} We consider the example of a principal congruence group $\Gamma = \Gamma(N)$ for $N$ divisible by $4$, in which case the Hausdorff limit was computed explicitly in \cite[\S 7.4]{CCR}. 
 Again, $c_{\Gamma(N)}=N^2$. In Figure \ref{arcs}, the $6$ distinct arcs $E_j$ ($j=1,\dots,6$) in the possible support of the limit measure in $\F$ are labelled and we also display a table in which one finds per arc how many of the $N^2$ choices of labels $(a,b) \in (N^{-1}\Z/\Z)^2$ result in limiting zeros along that $j$-th arc, what is the corresponding matrix $\sigma_j$, and what is the angular length (total measure times $2\pi$) of the corresponding arc $I_j$ on the unit circle, in terms of $\alpha=\arccos(1/4)$. For anything but arc 1 to occur, one needs $a=\pm 1/4$, and then, arc 2 and 3 occur together for $b \notin \{ \pm1/4,0,1/2\}$, arc 3 occurs without arc 2 for $b\in \{0,1/2\}$,  and arc 4, 5 and 6 together for $b=\pm 1/4$. Note that since several arcs occur for the same Eisenstein series, and some Eisenstein series have no zeros in the standard fundamental domain (that is, after all, not $\Gamma(N)$-invariant), the total sum of multiplicities is not $N^2$. Also notice that, apart from the case where $N=4$, all six arc occur in the support of the limit measure, whilst for $N=4$, all but arc 2 do so. 
 
Explicitly, writing $|\d\theta_j| \defeq |\d\, \mathrm{arg}(\sigma_j \cdot z)|_{E_j} = (\sigma_j|_{E_j})^* |d\theta|_{I_j}$ for measure concentrated on the $j$-th arc and equal to the angular length of $I_j$ there, and setting \[|\d\theta_{456}| = |d\theta_4| +  |\d\theta_5| + |\d\theta_6|,\] we find as limit of the zero counting measure
\[\boxed{\mu_{k,\Gamma(N)} \rightarrow \frac{6}{\pi N^2} \Bigg( (N-2)^2 |\d\theta_1|+ (2N-8) |\d\theta_2| + (2N-4) |\d\theta_3|+ 4 |d\theta_{456}| \Bigg) \ \ (4 \mid N).} \]
  As a sanity check, replacing in the right hand side expression each $|\d \theta_j|$ by the corresponding total measure of the corresponding arc, one finds $1$, so that the limit is indeed a probability measure. 
\begin{figure}[t]
  \begin{minipage}[c]{0.29\textwidth}
    \centering
 \centering
  \includegraphics[width=4cm]{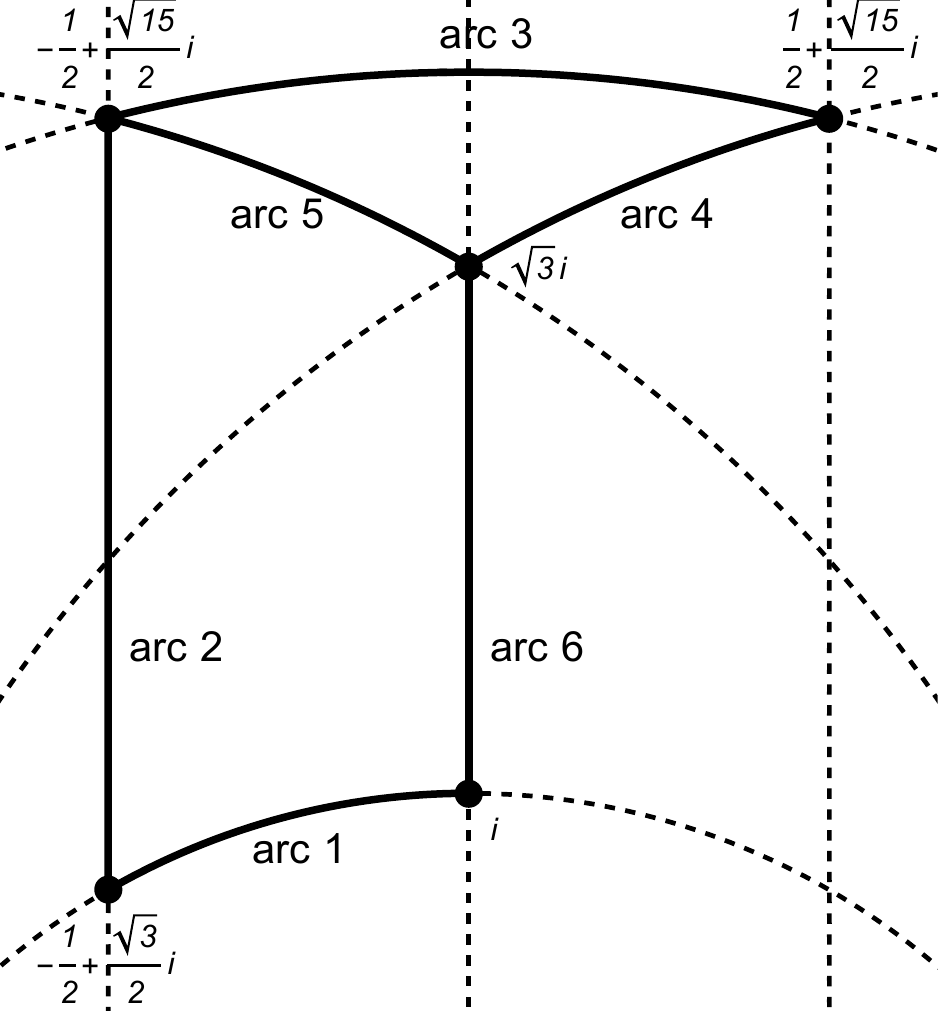}
  \end{minipage}
  \hfill
  \begin{minipage}[c]{0.69\textwidth}
    \centering
 {\tiny  
 \bgroup
\def\arraystretch{1.3}
  \begin{tabular}{l|ccccccc}
 arc $j$ & arc 1 & arc 2 & arc 3 & arc 4 & arc 5 & arc 6 \\
\hline
multiplicity & $(N-2)^2$ & $2N-8$ & $2N-4$ & $4$ & $4$ & $4$ \\
$\sigma_j \cdot z$ & $z$ & $\frac{z+1}{z}$ & $\frac12 z$ & $\frac12 (z-1)$ & $\frac12 (z+1)$ & $\frac{z+1}{z-1}$ \\ 
angular length fo $I_j$ & $\frac{\pi}{6}$ & $2(\alpha-\frac{\pi}{3})$ & $\pi-2\alpha$ & $\alpha-\frac{\pi}{3}$ & $\alpha-\frac{\pi}{3}$ & $\frac{\pi}{6}$ \\[2mm]
\end{tabular}
\egroup
}
\end{minipage}
\caption{Left: six numbered geodesic segments (arcs; denoted by solid lines) in the standard fundamental domain for $\Gamma(1)$; Right: for $N$ divisible by $4$, a table with, per arc, how many of the $N^2$ Eisenstein series have zeros tending to it, what matrix $\sigma$ maps it to the unit circle, and what is the angular length of the corresponding segment of the unit disk (integral of $\Delta \Phi$ over the entire arc), in terms of $\alpha=\arccos(1/4)$. Notice that for $N=4$, arc 2 does not occur, but all other five do; otherwise, all six distinct arcs occur in the support of the limit.} \label{arcs}
  \end{figure}
%
\end{example} 

\subsection{} \label{finalsec} The nonatomicity of the limit measure can be used to prove the following two corollaries. 

\begin{corollary} \label{smallo} 
Fix a group $\Gamma$. Every compact subset disjoint from the Hausdorff limit  of $|Z_{k,\Gamma}|$ is eventually free of zeros. Furthermore, 
\[ \sup_{x \in Y(1)} \mathrm{ord}_x(\mathcal N_{k,\Gamma}) = o(k).\] 
\end{corollary}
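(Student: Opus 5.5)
The first assertion will follow directly from the Hausdorff convergence, without any measure theory. By \cite[Thm.~A]{CCR} (Subsection \ref{zerosincompact}), all zeros lie in the compact set $\DD_\Gamma$. By \cite[Thm.~E]{CCR} (Subsection \ref{hausdorfflimit}), $|Z_{k,\Gamma}|$ converges in the Hausdorff metric to the finite union $Y_\infty$ of geodesic segments. Let $K$ be a compact subset disjoint from $Y_\infty$. Then $K$ has positive distance $\delta$ from $Y_\infty$, computed in the metric on $\overline\F$. Hausdorff convergence gives $|Z_{k,\Gamma}| \subseteq \{x : d(x,Y_\infty) < \delta\}$ for all large $k$, so $K$ is eventually free of zeros. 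Alternatively, one can argue per factor $F_k$: $H_k \to \Phi > -\infty$ uniformly on compacts of $\DD\setminus Y$, so $F_k$ has no zeros there for large $k$.

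For the $o(k)$ bound I will argue by contradiction, using the weak convergence $\mu_{k,\Gamma}\to\mu_{\infty,\Gamma}$ of the Main Theorem and the absence of atoms from Subsection \ref{noatoms}. Suppose there are $\varepsilon>0$, a subsequence $k_j\to\infty$ and points $x_j\in Y(1)$ with $\mathrm{ord}_{x_j}(\mathcal N_{k_j,\Gamma}) \geq \varepsilon k_j$. Since $x_j$ is a zero, it lies in the compact image of $\DD_\Gamma$ in $Y(1)$. After passing to a subsequence, $x_j\to x_\infty$. By the definition of $\mu_{k,\Gamma}$ and the valence formula \eqref{valence}, the atom at $x_j$ has mass
\[ \mu_{k_j,\Gamma}(\{x_j\}) = \frac{12\,\mathrm{ord}_{x_j}(\mathcal N_{k_j,\Gamma})}{c_\Gamma k_j\,\#\mathrm{Stab}(x_j)} \geq \frac{12\varepsilon}{6c_\Gamma} = \frac{2\varepsilon}{c_\Gamma}, \]
since stabilisers in $\Gamma(1)$ have order at most $3$; I use $6$ as a safe bound in the denominator.

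Now fix $r>0$ and a closed ball $\overline B(x_\infty,r)$ in $Y(1)$. For large $j$, $x_j\in \overline B(x_\infty,r)$, so $\mu_{k_j,\Gamma}(\overline B(x_\infty,r))\geq 2\varepsilon/c_\Gamma$. The Portmanteau theorem for weak convergence on closed sets gives
\[ \mu_{\infty,\Gamma}(\overline B(x_\infty,r)) \geq \limsup_j \mu_{k_j,\Gamma}(\overline B(x_\infty,r)) \geq 2\varepsilon/c_\Gamma. \]
Letting $r\to0$ and using continuity from above of the finite measure $\mu_{\infty,\Gamma}$, I get $\mu_{\infty,\Gamma}(\{x_\infty\})\geq 2\varepsilon/c_\Gamma>0$. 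This contradicts the non-atomicity of $\mu_{\infty,\Gamma}$: by Subsection \ref{noatoms}, each $\Delta\Phi^{(a,b)}_{\gamma^{-1}\Gamma\gamma}$ has no atoms, and the normalised descent $\widetilde\pi_*$ of a finite sum of atomless measures is atomless.

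The main obstacle is justifying the Portmanteau step on the orbifold $Y(1)$, where the metric is the one inherited from $\overline\F$. Weak convergence in Proposition \ref{propPhi} is against bounded continuous functions on $Y(1)$, so I will avoid the closed-set form and use an explicit test function instead. I take a continuous $\psi$ with $0\le\psi\le1$, $\psi=1$ on $B(x_\infty,r)$ and support in $B(x_\infty,2r)$. Then for large $j$,
\[ \int\psi\,\d\mu_{k_j,\Gamma}\geq 2\varepsilon/c_\Gamma, \]
hence $\int\psi\,\d\mu_{\infty,\Gamma}\geq 2\varepsilon/c_\Gamma$, and therefore $\mu_{\infty,\Gamma}(B(x_\infty,2r))\geq 2\varepsilon/c_\Gamma$. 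This yields the same contradiction as $r\to0$. Care is needed only when $x_\infty$ is an orbifold point or lies on the boundary identifications of $\F$, where such a $\psi$ must be built in the orbifold chart; continuity on $Y(1)$ then means $\Gamma(1)$-invariance of the lift, which is straightforward.
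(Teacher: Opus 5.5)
Your proposal is correct and follows essentially the paper's own proof: Hausdorff convergence for the first claim, and for the $o(k)$ bound a contradiction argument using compactness of the zero locus in $Y(1)$, the closed-set Portmanteau inequality, continuity from above as $r\to 0$, and the non-atomicity of $\mu_{\infty,\Gamma}$. The differences are cosmetic: you argue directly with orders instead of first proving $\sup_{x}\mu_{k,\Gamma}(\{x\})\to 0$ and then using $\#\mathrm{Stab}_{\Gamma(1)}(x)\leq 3$, and your test-function variant just reproves the Portmanteau step (which needs no orbifold charts, since a metric Urysohn function on $Y(1)$ suffices).
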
 

\begin{proof} 
The first statement is clear from the Hausdorff convergence. 
For the second, first notice that \begin{equation} \label{supmu} \sup_{x \in Y(1)} \mu_{k,\Gamma}(\{x\}) \rightarrow 0.\end{equation} If this were not true, there would be a sequence of points $\{x_n\}$ and weights $\{k_n\}$ with $\mu_{k_n,\Gamma}(\{x_n\})>\varepsilon$ for some absolute $\varepsilon>0$. Since all zeros belong to a fixed compact set (the image of $\DD_\Gamma$ in $Y(1)$)  independent of the weight, this sequence then has a further subsequence, by abuse of notation also denoted $\{x_n\}$, converging to $x$ in $Y(1)$. Eventually, $x_n$ enters the closed disk $B_r$ of radius $r$ around $x$, so 
\[ \varepsilon \leq \limsup_k \mu_{k,\Gamma}(B_r) \leq \mu_{\infty, \Gamma}(B_r), \] 
by the closed-set inequality from the Portmanteau theorem for weak convergence \cite[Theorem 5.25]{Kallenberg}.
Letting $r$ tend to zero, by upper continuity of the limit measure, this would imply that $\mu_{\infty, \Gamma}(\{x\}) \geq \epsilon$, contradicting the fact that the limit measure has no atoms, as was shown in Subsection \ref{noatoms}. This proves \eqref{supmu}. Since $\# \mathrm{Stab}_{\Gamma(1)}(x) \leq 3$, we find 
\[ \sup_{x \in Y(1)} \frac{1}{k} \cdot \mathrm{ord}_x(\mathcal N_{k,\Gamma}) \leq \frac{c_\Gamma}{4} \sup_{x \in Y(1)} \mu_{k,\Gamma}(\{x\}), \]
  The convergence to zero  of the right hand side implies the $o(k)$ claim for $\sup_{x} \mathrm{ord}_x(\mathcal N_{k,\Gamma})$. 
\end{proof}

Instead of the orbifold zero counting measure, consider its `unweighted' counterpart $\nu_{k,\Gamma}$ defined by 
\[ \nu_{k,\Gamma} \defeq \frac{1}{\#Z_{k,\Gamma}}  \sum_{x \in Y(1)} \mathrm{ord}_x(\mathcal N_{k,\Gamma}) \delta_x. \]
In this formula, $\#Z_{k,\Gamma} = \sum_{x \in Y(1)} \mathrm{ord}_x(\mathcal N_{k,\Gamma})$ is the number of elements of the \emph{multiset} of zeros, each counted with multiplicity. The difference with respect to the orbifold measure is that the elliptic points are not counted with fractional weight given by the order of their stabilizer. 

\begin{corollary} \label{noorb} 
Fix a group $\Gamma$. We have a weak convergence of measures  on $Y(1)$
\[ \nu_{k,\Gamma} \rightarrow \mu_{\infty, \Gamma}. \] 
\end{corollary}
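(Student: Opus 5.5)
The plan is to compare $\nu_{k,\Gamma}$ with $\mu_{k,\Gamma}$ directly and show that their difference tends to zero in total variation. Only the two orbifold points $[i]$ and $[\rho]$ distinguish the two measures. Write
\[ S_k \defeq \#Z_{k,\Gamma} = \sum_{x \in Y(1)} \mathrm{ord}_x(\mathcal N_{k,\Gamma}), \qquad D_k \defeq \deg(Z_{k,\Gamma}) = \frac{c_\Gamma k}{12}. \]
Then $S_k - D_k = \tfrac12 \mathrm{ord}_{[i]}(\mathcal N_{k,\Gamma}) + \tfrac23 \mathrm{ord}_{[\rho]}(\mathcal N_{k,\Gamma})$. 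By Corollary \ref{smallo}, both orders are $o(k)$, so $S_k - D_k = o(k)$ and hence $S_k/D_k \to 1$.

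Next we write out the difference of the measures:
\[ \nu_{k,\Gamma} - \mu_{k,\Gamma} = \Big(\frac{D_k}{S_k} - 1\Big)\mu_{k,\Gamma} + \frac{1}{S_k}\Big( \tfrac12 \mathrm{ord}_{[i]}(\mathcal N_{k,\Gamma})\, \delta_{[i]} + \tfrac23 \mathrm{ord}_{[\rho]}(\mathcal N_{k,\Gamma})\, \delta_{[\rho]} \Big). \]
To see this, note that on non-orbifold points $\nu_{k,\Gamma}$ equals $\frac{D_k}{S_k}\mu_{k,\Gamma}$. At $x=[i]$ or $[\rho]$, $\mu_{k,\Gamma}$ carries $\mathrm{ord}_x/(mD_k)$ while $\nu_{k,\Gamma}$ carries $\mathrm{ord}_x/S_k$, and the difference of these is exactly accounted for by the displayed correction.

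We now bound the total variation of each term. The first term has total mass $|D_k/S_k - 1| \to 0$. The second term has mass at most $\frac{1}{S_k}\cdot o(k)$. Since $S_k \ge D_k \asymp k$, this also tends to $0$.

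Finally, for any bounded continuous $\varphi$ on $Y(1)$ we get $|\langle \nu_{k,\Gamma} - \mu_{k,\Gamma}, \varphi\rangle| \le \|\varphi\|_\infty \|\nu_{k,\Gamma}-\mu_{k,\Gamma}\|_{TV} \to 0$. Combined with Proposition \ref{propPhi} and the Main Theorem, which give $\mu_{k,\Gamma} \to \mu_{\infty,\Gamma}$ weakly, this yields $\nu_{k,\Gamma} \to \mu_{\infty,\Gamma}$.

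The only substantive input is the $o(k)$ bound at the two elliptic points. This comes from Corollary \ref{smallo}, which in turn rests on the non-atomicity from Subsection \ref{noatoms}. The rest is bookkeeping, so I expect no real obstacle beyond checking the normalisation constants at $[i]$ and $[\rho]$.
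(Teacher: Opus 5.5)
Your proposal is correct and matches the paper's proof. Both write $\nu_{k,\Gamma} = \frac{D_k}{S_k}\mu_{k,\Gamma} + \frac{1}{S_k}\bigl(\tfrac12 \mathrm{ord}_{[i]}(\mathcal N_{k,\Gamma})\,\delta_{[i]} + \tfrac23 \mathrm{ord}_{[\rho]}(\mathcal N_{k,\Gamma})\,\delta_{[\rho]}\bigr)$, use Corollary \ref{smallo} to get $S_k = D_k + o(k)$, and deduce that the total variation distance to $\mu_{k,\Gamma}$ is $o(1)$, so the two weak limits agree; the only difference is that you check the transfer of weak limits directly by bounding test-function integrals by $\|\varphi\|_\infty$ times the total variation, whereas the paper cites a reference for it.
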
 

\begin{proof} We will show that the so-called total variation satisfies \begin{equation} \label{totvar} \sup_{B \subseteq Y(1)} |\nu_{k,\Gamma}(B) - \mu_{k,\Gamma}(B)| = o(1), \end{equation} where $B$ runs over measurable sets; it follows from \eqref{totvar} that the two sequences have the same weak limit \cite{GS}. 

To prove \eqref{totvar}, we use the shorthand notation 
$ a_k \defeq \mathrm{ord}_{[i]} (\mathcal N_{k,\Gamma})$, $b_k \defeq \mathrm{ord}_{[\rho]} (\mathcal N_{k,\Gamma})$ and $z_k \defeq \# Z_{k,\Gamma}$
in the above sense of multiset cardinality, so that $z_k = \deg(Z_{k,\Gamma}) + a_k/2+2b_k/3$. By Corollary \ref{smallo}, $a_k= o(k)$ and $b_k= o(k)$, and hence also $z_k = \deg(Z_{k,\Gamma}) + o(k) = c_\Gamma k/12 + o(k)$. This allows us to conclude that both $a_k/z_k$ and $b_k/z_k$ are $o(1)$.  
We have 
\[ \nu_{k,\Gamma} = \frac{1}{z_k} \Big(\deg Z_{k,\Gamma} \cdot \mu_{k,\Gamma} + \frac{a_k}{2} \delta_{[i]} + \frac{2b_k}{3} \delta_{[\rho]}\Big), \]
and hence for any measurable set $B \subseteq Y(1)$, 
\[ |\nu_{k,\Gamma}(B) - \mu_{k,\Gamma}(B)| \leq \Big|\frac{c_\Gamma k}{12 z_k} - 1\Big| \cdot \mu_{k,\Gamma}(B) +  \frac12\frac{a_k}{z_k} + \frac23\frac{b_k}{z_k} = o(1), \]
using the above estimates. 
\end{proof}

\bibliographystyle{amsplain}

\end{document}